\newtheorem{theorem}{Theorem}[section]
\newtheorem{corollary}[theorem]{Corollary}
\newtheorem{lemma}[theorem]{Lemma}
\newtheorem{proposition}[theorem]{Proposition}
\theoremstyle{definition}
\newtheorem{definition}{Definition}[section]
\theoremstyle{remark}
\newtheorem{remark}{Remark}[section]
\newtheorem{example}{Example}[section]
\newcommand{\m}{\mathfrak{M}}
\newcommand{\di}{\mathrm{d}}
\newcommand{\dddet}{\underline{\underline{\det}}}
\newcommand{\ddet}{\underline{\det}}
\newcommand{\san}{\mathcal{S}_A^n}
\newcommand{\End}{\mathrm{End}}
\newcommand{\Hilb}{\mathrm{Hilb}}
\newcommand{\GL}{\mathrm{GL}}
\newcommand{\TS}{\mathrm{TS}}
\newcommand{\MP}{\mathrm{MP}}
\newcommand{\PP}{\mathrm{P}}
\newcommand{\NN}{\mathbb{N}}
\newcommand{\Z}{\mathbb Z}
\newcommand{\al}{\alpha}
\newcommand{\bt}{\beta}
\newcommand{\ups}{\upsilon}
\newcommand{\gm}{\gamma}
\newcommand{\A}{\mathcal{A}}
\newcommand{\C}{\mathcal{C}}
\newcommand{\nn}{\mathcal{N}}
\newcommand{\pd}[2]{#1^{(#2)}}
\newcommand{\K}{k}
\newcommand{\CC}{\mathbb{C}}
\newcommand{\ZZ}{\mathbb{Z}}
\newcommand{\ran}{\mathcal{R}ep_A^n}
\newcommand{\rana}{\ran \times _R \mathbb{A}^n_R}
\newcommand{\uan}{\mathrm{U}_A^n}
\newcommand{\van}{V_n(A)}
\title{Moduli of linear representations, symmetric products and the non commutative Hilbert scheme}
\author{Francesco Vaccarino}
\date{}
\begin{document}
\maketitle \markboth{} {}

\centerline{"Geometric methods in representation theory"}
\centerline{ Grenoble, June 16 - July 4, 2008}

\section{Introduction}
Let $k$ be a commutative ring and let $R$ be a commutative $k-$algebra.
Given a positive integer $n$  and a $R-$algebra $A$ one can consider three functors of points from the category $\C_R$ of commutative $R-$algebras to the small category of sets.
All these functors are representable, namely
\begin{itemize}
  \item $\ran$ represents the functor induced  by $B\to\hom_R(A,M_n(B))$, where $M_n(B)$ are the $n\times n$ matrices over $B$, for all $B\in\C_R$.
  \item the non-commutative Hilbert scheme $Hilb^n_A$ represents the functor induced  by
  \[B\to\{\text{ left ideals } I\subset A\otimes_kB\,:\, A\otimes_kB/I \text{ is a projective $R-$module of rank $n$}\}\]
  for all $B\in \C_R$.
  \item Spec $\Gamma ^n_R(A)^{ab} \,$ represents the functor induced by
  \[
    B\to\{\text{ multiplicative polynomial laws homogeneous of degree $n$ from $A$ to $B$ }\}
 \]
 for all $B\in \C_R$.
\end{itemize}

When $A$ is commutative $Hilb^A_n$ is the usual Hilbert scheme of $n-$points of $X=\mathrm{Spec}\,A$.
A polynomial law is a kind of map generalizing polynomial mapping and coinciding with it over flat $R-$modules. The typical example of multiplicative polynomial law homogeneous of degree $n$ is the determinant of $n\times n$ matrices. The $R-$algebra $\Gamma ^n_R(A)^{ab}$ is the quotient of the $R-$algebra $\Gamma ^n_R(A)$ of the divided powers of degree $n$ on $A$ by the ideal generated by commutators . When $A$ is flat as $R-$module this coincides with the symmetric tensors of order $n$ that is $\Gamma ^n_R(A)\cong (A^{\otimes n})^{S_n}$, where $S_n$ is the symmetric group. Therefore when $A$ is commutative and flat we have $\mathrm{Spec}\,\Gamma ^n_R(A)^{ab}\cong X^{(n)}$, the $n-$th symmetric product of $X=\mathrm{Spec}\,A$.

We discuss the connections between the coarse moduli space $\ran//GL_n$ of the $n-$dimensional representations of $A,\,$ with $Hilb_A^n$ and the affine scheme Spec $\Gamma ^n_R(A)^{ab} \,$. We build a norm map from $Hilb_A^n$ to $\Gamma^n_R(A)^{ab} \,$ which specializes to the Hilbert-Chow morphism on the geometric points when $A$ is commutative and $k$ is an algebraically closed field. This generalizes the construction done by Grothendieck, Deligne and others. When $k$ is an infinite field
and $A=k\{x_1,\dots,x_m\}$ is the free $k-$associative algebra on $m$ letters, we use the isomorphism Spec $\Gamma ^n_k(A)^{ab}\cong
\ran//GL_n$ following from Theorem 1 in \cite{V1} to give a simple description of this norm map.

A field of application of our construction can be the extension to the positive characteristic case of some of the results due to L. Le Bruyn on noncommutative desingularization which can be found in \cite{lebook}. Another intriguing possibility is to use our construction to extend the work done by C.H.Liu and S.T.Yau on D-Branes in \cite{Y1} to the more recent non commutative case in \cite{Y2}.

This survey paper is based on \cite{GV,V2,V3,V1,V5}.

\medskip
\centerline{\bf{Acknowledgements}}
I would like to thank the organizers for their invitation. The author is supported by Progetto di Ricerca Nazionale COFIN 2007
"Teoria delle rappresentazioni: aspetti algebrici e geometrici ."
\section*{Notations}\label{not} Unless otherwise stated we adopt the following
notations:
\begin{itemize}
\item $k$ is a fixed commutative ground ring.
\item $R$ is a commutative $k-$algebra.
\item $B$ is a commutative $R-$algebra.
\item $A$ is an arbitrary $R-$algebra.
\item $F=k\{x_1,\dots,x_m\}$ denotes the associative free $k-$algebra on
$m$ letters.
\item $\mathcal{N}_-,\,\mathcal{C}_-, Mod_-$ and $Sets$ denote the
categories of $-$algebras,
commutative $-$algebras, $-$modules and sets, respectively.
\item we write $\A(B,C)$ for the $\hom_{\A}(B,C)$  set in a category $\A$ with
$B,C$ objects in $\A$.
\end{itemize}

\section{Moduli of representations}\label{funct}

\subsection{The universal representation }
We denotes by $M_n(B)$ the full ring of $n \times n$ matrices over
$B.\,$ If $f \ : \ B \to C $ is a ring homomorphism we denote with
$ M_n(f)\ : \ M_n(B) \to M_n(C) $ the homomorphism induced on
matrices.
\begin{definition}
By an { \em n-dimensional representation of} $A$ over $B$ we mean
a homomorphism of $R-$algebras $ \rho\, : A \, \to M_n(B).\,
$\end{definition} The assignment $B\to \nn_R(A,M_n(B))$ defines a
covariant functor $\mathcal {N}_{R} \rightarrow Sets .\,$ This
functor is represented by a commutative $R-$algebra. We report
here the proof of this fact to show how this algebra comes up
using generic matrices. These objects will be also crucial in the
construction of the norm map in Section \ref{nmap}.

\begin{lemma}\cite[Lemma 1.2.]{DPRR}\label{repr}
For all $A\in\nn_R$ there exist a commutative $R-$algebra $V_n(A)$
and a representation $\pi_A:A\to M_n(V_n(A))$ such that
$\rho\mapsto M_n(\rho)\cdot\pi_A$ gives an isomorphism
\begin{equation}\label{proc}
\C_R(V_n(A),B)\xrightarrow{\cong}\nn_R(A,M_n(B))\end{equation} for
all $B\in C_R$.
\end{lemma}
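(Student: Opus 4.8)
The plan is to construct $V_n(A)$ explicitly as a quotient of a polynomial ring whose variables are "generic matrix entries," and then check the universal property directly. First I would treat the free case: write $A$ as a quotient of a free $R$-algebra $R\{y_i : i \in J\}$ on a (possibly infinite) generating set, say $A = R\{y_i\}/\mathfrak{a}$. For each generator $y_i$ introduce $n^2$ commuting indeterminates $\xi^{(i)}_{pq}$ with $1 \le p,q \le n$, form the polynomial $R$-algebra $P = R[\xi^{(i)}_{pq}]$, and let $Y_i \in M_n(P)$ be the generic matrix $(\xi^{(i)}_{pq})_{pq}$. The assignment $y_i \mapsto Y_i$ extends uniquely to an $R$-algebra homomorphism $\tilde\pi : R\{y_i\} \to M_n(P)$. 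Then I would define $\mathfrak{b} \subset P$ to be the ideal generated by all entries of all matrices $\tilde\pi(r)$ as $r$ ranges over (a generating set of) $\mathfrak{a}$, set $V_n(A) = P/\mathfrak{b}$, and let $\pi_A : A \to M_n(V_n(A))$ be the map induced by $\tilde\pi$ after reduction mod $\mathfrak{b}$; one checks it is well-defined precisely because we killed the entries of the relations.

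Next I would verify the bijection \eqref{proc}. Given $f \in \C_R(V_n(A), B)$, the composite $M_n(f)\circ\pi_A : A \to M_n(B)$ is an $R$-algebra homomorphism, i.e. an $n$-dimensional representation, so the map $f \mapsto M_n(f)\cdot \pi_A$ is well-defined and clearly natural in $B$. For surjectivity, start from any representation $\rho : A \to M_n(B)$: each $\rho(y_i)$ is a matrix $(b^{(i)}_{pq}) \in M_n(B)$, and sending $\xi^{(i)}_{pq} \mapsto b^{(i)}_{pq}$ defines an $R$-algebra map $P \to B$; since $\rho$ kills $\mathfrak{a}$, this map kills the entries of $\tilde\pi(r)$ for $r \in \mathfrak{a}$, hence factors through a map $f : V_n(A) \to B$ with $M_n(f)\circ\pi_A = \rho$. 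For injectivity, observe that $f$ is determined by its values on the classes of the $\xi^{(i)}_{pq}$, and these values are exactly the entries of $M_n(f)(\pi_A(y_i))$, so two maps inducing the same representation agree on all generators of $V_n(A)$.

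The only genuine subtlety — and the step I would be most careful about — is the independence of the construction from the chosen presentation of $A$, i.e. showing $V_n(A)$ is well-defined up to canonical isomorphism and functorial in $A$. This follows formally once the universal property \eqref{proc} is established, since a representing object is unique up to unique isomorphism by Yoneda; so in practice I would present the construction for one presentation, prove \eqref{proc}, and then remark that $A \mapsto V_n(A)$ becomes a functor $\nn_R \to \C_R$ left adjoint (in the appropriate sense) to $B \mapsto M_n(B)$. A secondary point worth stating explicitly is naturality of \eqref{proc} in $B$, which is immediate from functoriality of $M_n(-)$ but should be noted since it is what makes $V_n(A)$ the representing object rather than merely giving a bijection for each fixed $B$. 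Everything else is the routine bookkeeping of generic matrices, and I would not belabor it.
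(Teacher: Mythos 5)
Your proposal is correct and follows essentially the same route as the paper: present $A$ as a quotient of a free algebra, build the polynomial ring on generic matrix entries with the universal map $x_k\mapsto\xi_k$, and obtain $V_n(A)$ by killing the relations (the paper packages your ideal of entries $\mathfrak{b}$ as the ideal $K$ with $M_n(K)$ equal to the ideal generated by $\pi_A(I)$, which is the same thing). Your verification of the bijection and the Yoneda remark on presentation-independence are just more explicit versions of what the paper leaves as ``it should be clear.''
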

\begin{proof}
Let $A$ be an arbitrary $R-$algebra and suppose that there exist $\pi_A$ and $V_n(A)$ as in the statement. Let $I\subset A$ be a bilateral ideal
of $A$. Let $J$ be the ideal generated by $\pi_A(I)$ in $M_n(V_n(A))$. There exists then an ideal $K\subset V_n(A)$ such that $J=M_n(K)$.
It should be clear that $V_n(A/I)=V_n(A)/K$ and that $\pi_{A/I}:A/I\to M_n(V_n(A/I))$ is given by
\[
\xymatrix{A\ar[r]^{\pi _A} \ar[d] & M_n(V_n(A))\ar[d]\\
A/I\ar[r] _(.2){\pi _{A/I}} & M_n(V_n(A/I))=M_n(V_n(A))/J}
\]

It is enough now to prove the statement for $A=R\{x_1,\dots,x_m\}=R\otimes_k F$  the free
associative $R-$algebra on $m$ letters. Let $V_n(A)=R[\xi_{kij}]$
be the polynomial ring in variables $\{\xi_{kij}\,:\,
i,j=1,\dots,n,\, k=1,\dots,m\}$ over the base ring $R$. To every
$n-$dimensional representation of $A$ over $B$ it corresponds a
unique $m-$tuple of $n\times n$ matrices, namely the images of
$x_1,\dots,x_m$, hence a unique
$\bar{\rho}\in\C_R(R[\xi_{kij}],B)$ such that
$\bar{\rho}(\xi_{kij})=(\rho(x_k))_{ij}$. Following C.Procesi
{\cite{Pr1,DPRR}} we introduce the generic matrices. Let
$\xi_k=(\xi_{kij})$ be the $n\times n$ matrix whose $(i,j)$ entry
is $\xi_{kij}$ for $i,j=1,\dots,n$ and $k=1,\dots,m$.  We call
$\xi_{1},\dots,\xi_{m}$ the generic $n\times n$ matrices. Consider
the map
\[
\pi_A:A \to M_n(V_n(R)),\;\;\;\;x_{k}\longmapsto \xi_{k},\;\;\;
k=1,\dots,m\,.
\]
It is then clear that the map $\C_R(V_n(A),B)\ni \sigma\mapsto
M_n(\sigma)\cdot\pi_A\in\nn_R(A,M_n(B))$ gives the isomorphism
(\ref{proc}) in this case.
\end{proof}
\begin{remark}
It should be clear that the number of generators $m$ of $A$ is
immaterial and we can extend the above isomorphism to an arbitrary $R-$algebra.
\end{remark}
\begin{definition}\label{pigreco}\label{class}
We write $\ran$ to denote Spec\,$V_n(A).\,$ It is considered as an
$R-$scheme. \\ The map
\begin{equation}
\pi _A :A \to M_n(V_n(A)),\;\;\;\;x_{k}\longmapsto \xi_{k}.
\end{equation}
is called {\em the universal n-dimensional representation.}

\smallskip
Given a representation $\rho : A \to M_n(B)$ we denote by
$\bar{\rho}$ its classifying map $\bar{\rho}:V_n(A)\to B\,$.
\end{definition}

\begin{example}\label{fcase}
For the free algebra one has $\ran\cong M_n^m$ the scheme whose
$B-$points are the $m-$tuples of $n\times n$ matrices with entries in $B$.
\end{example}

\begin{example}\label{commuting}
Note that $\ran$ could be quite complicated, as an example, when
$A=\CC [x,y]$ we obtain that $\ran$ is the \textit{commuting
scheme} i.e. the couples of commuting matrices and it is not even
known (but conjecturally true) if it is reduced or not, see
\cite{V3}.
\end{example}

\subsection{$\GL_n-$action}\label{repaction}
\begin{definition}
We denote by $\GL_{n}$ the affine group scheme whose group of $B-$points form the group $ \GL_n(B)$ of $n \times n$ invertible
matrices with entries in $B$, for all $B\in\C_R$.
\end{definition}
Define a $\GL_{n}$-action on $\ran$ as follows. For any $\varphi
\in \ran (B),\, g \in \GL_{n}(B),\,$ let $\varphi ^g : \van \to B$
be the $R-$algebra homomorphism corresponding to the
representation given by
\begin{equation}
\begin{matrix}
A & \to & M_n(B) \\
a & \longmapsto & g (M_n(\varphi)\cdot \pi _A(a))g^{-1}.
\end{matrix}
\end{equation}
Note that if $\varphi, \varphi '$ are $B-$points of $\ran ,\,$then
the $A-$module structures induced on $B^n$ by $\varphi$ and
$\varphi '$ are isomorphic if and only if there exists $g \in
\GL_{n}(B)$ such that $\varphi ' = \varphi ^g .$
\begin{definition}\label{modrep}
We denote by $\ran//\GL_n=\mathrm{Spec}\,V_n(A)^{\GL_{n}(R)}\,$
the categorical quotient (in the category of $R-$schemes) of
$\ran$ by $\GL_{n}.\,$ It is the \textit{(coarse) moduli space of
$n-$dimensional linear representations} of $A$.
\end{definition}

\subsubsection{Determinant}
\begin{definition}
We denote by $C_{n}(A)$ the $k-$subalgebra of $V_{n}(A)$ generated by the coefficients $e_i(\pi_A(f))$ of characteristic polynomials
\[\det(t-\pi_A(f))=t^n+\sum_{i=1}^n(-1)^i e_i(\pi_A(f))t^{n-i}\]
as $f$ varies in $A$.
\end{definition}
\begin{remark}
Let $B\in\C_R$ and let $b\in M_{n}(B)$ and $1\leq i \leq n$. Note that $e_i(b)$ is  the trace of
$\wedge^{i}(b)$. Obviously $e_1(b)$ is the trace of $b$ and $e_n(b)=\det(b)$ is the determinant of $b$.
\end{remark}

\begin{remark}\label{isocla}
We have just seen that the general linear group $\GL_n(R)$ acts on $V_{n}(A)$. It is clear that $C_n(A)\subset V_n(A)^{\GL_n(R)}$ for all $k,\, R\in\C_k$ and $A\in\nn_k$. It has been shown in {\cite{Do,Pr3,Zu}} that
$C_{n}(F)=V_{n}(F)^{\GL_n(k)}$ when $k$ is $\Z$ or an infinite field. When $k$ is a characteristic zero field it follows that $C_n(A)= V_n(A)^{\GL_n(k)}$ for all $A\in\nn_k$ because $\GL_n(k)$ is linear reductive in this case.
\end{remark}

\section{Symmetric products and divided powers}\label{poly}
In this part of the paper we introduce a representable functor whose representing scheme will play the same role played by the symmetric product in the classical Hilbert-Chow morphism.

\subsection{Polynomial laws}

We first recall the definition of polynomial laws between
$k$-modules. These are mappings that generalize the usual
polynomial mappings between free $k$-modules. We mostly follow {\cite{Ro1,Ro2, V5, zipgen} and we refer the interested reader to
these papers for detailed descriptions and proof.

\begin{definition} Let $M$ and $N$ be two
$k$-modules. A \emph{polynomial law} $\varphi$ from $M$ to $N$ is
a family of mappings $\varphi_{_{A}}:A\otimes_{k} M
\to A\otimes_{k} N$, with $A\in\C_{k}$ such that the
following diagram commutes
\begin{equation}
\xymatrix{
A\otimes_{k}M \ar[d]_{f\otimes id_M} \ar[r]^{\varphi_A}
& A\otimes_{k} N \ar[d]^{f\otimes id_N} \\
B\otimes_{k} M \ar[r]_{\varphi_B}
& B\otimes_{k} N }
\end{equation}
for all $A,\,B\in\C _k$ and all $f\in \C _k(A,B)$.
\end{definition}
\begin{definition}
Let $n\in \NN$. If $\varphi_A(au)=a^n\varphi_A(u)$, for all $a\in
A$, $u\in A\otimes_{k} M$ and all $A\in\C_k$, then $\varphi$
is called \emph{homogeneous of degree} $n$.
\end{definition}
\begin{definition}
If $M$ and $N$ are two $k$-algebras and
\[
\begin{cases} \varphi_A(xy)&=\varphi_A(x)\varphi_A(y)\\
\varphi_A(1_{A\otimes M})&=1_{A\otimes N}
\end{cases}
\]
for $A\in\C_{k}$ and for all $x,y\in A\otimes_{k} M$, then
$\varphi$ is called \emph{multiplicative}.
\end{definition}
We need the following
\begin{definition}\label{finsu}
For $S$ a set and any additive monoid $M$, we denote by $M^{(S)}$
the set of functions $f:S\rightarrow M$ with finite support.

Let $\al\in M^{(S)}$, we denote by $\mid \al \mid$ the (finite)
sum $\sum_{s\in S} \al(s)$,
\end{definition}

Let $A$ and $B$ be two $k-$modules and $\varphi:A\rightarrow B$ be
a polynomial law. The following result on polynomial laws is a
restatement of Th\'eor\`eme I.1 of {\cite{Ro1}}.
\begin{theorem}\label{roby} Let $S$ be a set.
\begin{enumerate}
\item Let $L=k[x_s]_{s\in S}$ and let $a_{s}$ be elements of $A$ such
that $a_{s}$ is $0$ except for a finite number of $s\in S$, then
there exist $\varphi_{\xi}((a_{s}))\in B$, with $\xi \in \NN^{(S)}$,
such that:
\begin{equation}\varphi_{_{L}}(\sum_{s\in S} x_s\otimes
a_{s})=\sum_{\xi \in \NN^{(S)}} x^{\xi}\otimes
\varphi_{\xi}((a_{s}))\end{equation}
where $x^{\xi}=\prod_{s\in S}
x_s^{\xi_s}$.
\item Let $R$ be any commutative $k-$algebra and let
$r_s\in R$ for $s\in S$, then:
\begin{equation}\varphi_{_{R}}(\sum_{s\in S}
r_s\otimes a_{s})=\sum_{\xi \in \NN^{(S)}} r^{\xi}\otimes
\varphi_{\xi}((a_{s}))\end{equation}
where $r^{\xi}=\prod_{s\in S}
r_s^{\xi_s}$.
\item If $\varphi$ is homogeneous of degree $n$, then one has
$\varphi_{\xi}((a_{s}))=0$ if $| \xi |$ is
different from $n$. That is:
\begin{equation}\varphi_{_{R}}(\sum_{s\in S}
r_s\otimes a_s)=\sum_{\xi \in \NN^{(S)},\,| \xi |=n}
r^{\xi}\otimes \varphi_{\xi}((a_s))\end{equation}
In particular, if $\varphi$ is
homogeneous of degree $0$ or $1$, then it is constant or linear,
respectively.
\end{enumerate}
\end{theorem}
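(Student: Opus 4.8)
The plan is to prove the three assertions in order. Assertion (1) is a bookkeeping statement about the free $k$-module $L=k[x_s]_{s\in S}$: it has the monomials $\{x^{\xi}:\xi\in\NN^{(S)}\}$ as a $k$-basis, so every element of $L\otimes_k B$ has a unique expression $\sum_{\xi}x^{\xi}\otimes b_{\xi}$ with $b_{\xi}\in B$ and almost all $b_{\xi}=0$. Since only finitely many $a_s$ are nonzero, $\sum_s x_s\otimes a_s$ is a genuine element of $L\otimes_k A$, its image under $\varphi_L$ lies in $L\otimes_k B$, and one simply \emph{defines} $\varphi_{\xi}((a_s))$ to be the coefficient of $x^{\xi}$ in that image; the finiteness of the resulting sum is automatic from the definition of the tensor product.

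For assertion (2), let $R\in\C_k$ and let $f:L\to R$ be the $k$-algebra homomorphism determined by $f(x_s)=r_s$. Instantiating the commutative square in the definition of the polynomial law $\varphi$ at the pair $(L,R)$ and the morphism $f$ gives $\varphi_R\circ(f\otimes\mathrm{id}_A)=(f\otimes\mathrm{id}_B)\circ\varphi_L$. Evaluating both sides on $\sum_s x_s\otimes a_s$ and using $f(x^{\xi})=r^{\xi}$ yields exactly $\varphi_R(\sum_s r_s\otimes a_s)=\sum_{\xi}r^{\xi}\otimes\varphi_{\xi}((a_s))$.

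For assertion (3), apply (2) with $R=L[t]=k[x_s,t]$ and $r_s=t\,x_s$. On one hand this gives $\varphi_{L[t]}(\sum_s t\,x_s\otimes a_s)=\sum_{\xi}t^{|\xi|}x^{\xi}\otimes\varphi_{\xi}((a_s))$. On the other hand $\sum_s t\,x_s\otimes a_s=t\cdot u$ with $u=\sum_s x_s\otimes a_s$ regarded in $L[t]\otimes_k A$, so homogeneity of degree $n$ gives $\varphi_{L[t]}(t\cdot u)=t^{n}\varphi_{L[t]}(u)=\sum_{\xi}t^{n}x^{\xi}\otimes\varphi_{\xi}((a_s))$, the last equality by (2) again (with $r_s=x_s$) or by naturality along $L\hookrightarrow L[t]$. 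Comparing coefficients in the $k$-free module $L[t]\otimes_k B$ on the monomials $t^{j}x^{\xi}$ forces $\varphi_{\xi}((a_s))=0$ whenever $|\xi|\neq n$. For the last part: when $n=0$ only $\xi=0$ survives, and setting $a=0$ in $\varphi_A(au)=\varphi_A(u)$ shows each $\varphi_A$ is the constant map with value $\varphi_A(0)$; when $n=1$ only the terms with $x^{\xi}$ a single variable survive, a two-variable substitution identifies the coefficients so that $\varphi_R(\sum_s r_s\otimes a_s)=\sum_s r_s\otimes\ell(a_s)$ for the map $\ell:A\to B$ defined by $\varphi_L(x\otimes a)=x\otimes\ell(a)$, whence (specializing the $r_s$) additivity of $\ell$, while the degree-one homogeneity relation $\varphi_A(au)=a\varphi_A(u)$ applied with $a=c\in k$ gives $\ell(ca)=c\,\ell(a)$; thus $\ell$ is $k$-linear and $\varphi$ is the linear law it induces.

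None of these steps is genuinely hard; the one place that needs care is the repeated use of the freeness of $L\otimes_k B$ and $L[t]\otimes_k B$ over $k$, which underlies both the well-definedness of the $\varphi_{\xi}((a_s))$ in (1) and the coefficient matching in (3), together with the slightly longer two-variable bookkeeping required to upgrade the degree-one case from ``concentrated in multidegree one'' to genuine $k$-linearity.
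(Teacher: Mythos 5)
Your proof is correct, but note that the paper itself does not prove this statement at all: it is presented as a restatement of Th\'eor\`eme I.1 of Roby's paper \cite{Ro1}, with the reader referred there for the proof. What you have written is essentially the standard (Roby-style) argument made self-contained: (1) is the unique expansion of $\varphi_L\bigl(\sum_s x_s\otimes a_s\bigr)$ in the $k$-basis of monomials of $L\otimes_k B$; (2) is naturality of the law along the evaluation homomorphism $L\to R$, $x_s\mapsto r_s$; and (3) is the usual substitution $r_s=t\,x_s$ in $L[t]$, comparing $t^{\lvert\xi\rvert}$ against $t^n$ coefficientwise in the free module $L[t]\otimes_k B$. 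All of this is sound, and your treatment of the degree-$0$ case (setting the scalar to $0$ in the homogeneity identity) is fine. The only place you compress more than you should is the degree-$1$ endgame: to define $\ell$ and conclude linearity you must check that the coefficient $\varphi_{\delta_{s_0}}((a_s))$ of the multi-index supported at $s_0$ depends only on $a_{s_0}$, which follows from naturality along the specialization sending $x_s\mapsto 0$ for $s\neq s_0$, after which additivity of $\ell$ comes from comparing the one-element family $(a+a')$ with the two-element family $(a,a')$ at $r_s=1$, and $k$-homogeneity gives $\ell(ca)=c\,\ell(a)$; your phrase ``a two-variable substitution identifies the coefficients'' is exactly this argument, so the gap is expository rather than mathematical. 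In short: correct, more detailed than the paper (which only cites \cite{Ro1}), and in the same spirit as the original proof.
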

\begin{remark}\label{coeff}
The above theorem means that a polynomial law $\varphi:A\rightarrow
B$ is completely determined by its coefficients
$\varphi_{\xi}((a_{s}))$, where $\{a_s\,:\,s\in S\}$ is a set of linear generator of $A$ and $\xi \in \NN^{(S)}$.
\end{remark}
\begin{remark}\label{freelaw} If $A$ is a free $k-$module and $\{a_{t}\,
:\, t\in T\}$ is a basis of $A$, then $\varphi$ is completely
determined by its coefficients $\varphi_{\xi}((a_{t}))$, with $\xi
\in \NN^{(T)}$. If also $B$ is a free $k-$module with basis
$\{b_{u}\, :\, u\in U\}$, then $\varphi_{\xi}((a_{t}))=\sum_{u\in
U}\lambda_{u}(\xi)b_{u}$. Let $a=\sum_{t\in T}\mu_{t}a_{t}\in A$.
Since only a finite number of $\mu_{t}$ and $\lambda_{u}(\xi)$ are
different from zero, the following makes sense:
\begin{eqnarray*}\varphi(a)=\varphi(\sum_{t\in T}\mu_{t}a_{t}) =
\sum_{\xi\in
\NN^{(T)}} \mu^{\xi}\varphi_{\xi}((a_{t}))& = & \sum_{\xi\in
\NN^{(T)}} \mu^{\xi}(\sum_{u\in U}\lambda_{u}(\xi)b_{u})\\ & = &
\sum_{u\in U}(\sum_{\xi\in \NN^{(T)}}\lambda_{u}(\xi)
\mu^{\xi})b_{u}.\end{eqnarray*} Hence, if both $A$ and $B$ are free
$k-$modules, a polynomial law $\varphi:A\rightarrow B$ is simply a
polynomial map.
\end{remark}
\begin{definition}\label{polset}
Let $k$ be a commutative ring.
\begin{itemize}
\item[(1)] For $M,N$ two $k-$modules we set $\PP_k^n(M,N)$ for
the set of homogeneous polynomial laws $M\rightarrow N$ of degree
$n$.
\item[(2)] If $M,N$ are two $k-$algebras we set $\MP_k^n(M,N)$ for
the multiplicative homogeneous polynomial laws $M\rightarrow N$ of
degree $n$.
\end{itemize}
\end{definition}
The assignment $N\to \PP_k^n(M,N)$ (resp.
$N\to \MP^n_k(M,N)$) determines a functor from
$Mod _k$ (resp. $\mathcal {N}_k$) to $Sets$.
\begin{example}\label{expol}
\begin{enumerate}
\item For all $A,B\in\nn_k$ it holds that $\MP_k^1(A,B)=\nn_k(A,B)$. A linear
multiplicative polynomial law is a $k-$algebra homomorphism.
\item For $B\in\C_k$ the usual determinant $\mathop{det}:M_n(B)\to B$
belongs to $\MP_k^n(M_n(B),B)$
\item For $B\in\C_k$ the mapping $b\mapsto b^n$ belongs to $\MP_k^n(B,B)$
\item For $B,C\in\C_B$ consider $p\in\MP_B^n(B,C)$, since
$p(b)=p(b\,1)=b^np(1)=b^n\,1_C$ it is clear that in this case raising to
the power $n$ is the unique multiplicative polynomial law homogeneous of
degree $n$.
\item When $A\in\nn_k$ is an Azumaya algebra of rank $n^2$ over its
center $k$, then its reduced norm $N$ belongs to $\MP_k^n(A,k)$.
\end{enumerate}
\end{example}
\subsection{Divided powers}
The functors just introduced in Def. \ref{polset} are represented
by the divided powers which we introduce right now.
\begin{definition}\label{divdef}
For a $k$-module
$M$ the {\em divided powers algebra} $\Gamma_{k}(M)$ (see
\cite{Ro1,Ro2}) is an
associative and commutative $k$-algebra with identity $1_{k}$
and product $\times$, with generators $\pd m k $, with $m\in M$,
$k \in \Z$ and relations, for all $m,n\in M$:
\begin{enumerate}
\item $\pd m i = 0, \forall i<0$;
\item $\pd m 0 = 1_{\scriptstyle{k}}, \forall m\in M$;
\item $\pd {(\al m)} i = \al^i\pd m i, \forall \al\in k,
\forall i\in \NN$;
\item $\pd {(m+n)} k = \sum_{i+j=k}\pd m i \times \pd n j , \forall
k\in \NN$;
\item\label{shuf} $\pd m i \times \pd m j = {i+j\choose i}\pd m {i+j} ,
\forall i,j\in \NN$.
\end{enumerate}
\end{definition}
The $k$-module $\Gamma_{k}(M)$ is generated by finite
products $\times_{i\in I}\, \pd {x_i} {\al_i}$ of the above
generators. The divided powers algebra $\Gamma_{k}(M)$ is an
$\NN$-graded algebra with homogeneous components
$\Gamma^n_{k}(M)$, ($n\in \NN$), the submodule generated by
$\{\times_{i\in I} \pd {x_i} {\al_i}\;
:\;\mid\al\mid=\sum_i\al_i=n\}$. One easily checks that
$\Gamma_{k}$ is a functor from $Mod _k$ to $\mathcal C _k$.
\begin{remark} Suppose $n!$ is invertible in $k$ e.g. $k\supset \mathbb{Q}$. The map given by $\pd x n \mapsto x^{n}/n!$ induces an isomorphism of $k-$algebras between $\Gamma_{k}(M)$ and the symmetric algebra over $M$. Whence the name {\it{divided powers}}.

\end{remark}

\subsection{Universal properties}
\subsubsection{Functoriality and adjointness}\label{upgamma}
$\Gamma_{k}^n$ is a covariant functor from $Mod_{k}$ to $Mod_{k}$
and one can easily check that it preserves surjections. The map
$\gamma^n:r\mapsto \pd r n$ is a polynomial law
$M\rightarrow\Gamma^n_{k}(M)$ homogeneous of degree $n$. We call
it \emph{the universal map} for the following reason: consider
another $k-$module $N$ and the set $Mod_{k}(\Gamma^n_{k}(M),N)$ of
homomorphisms of $k-$modules between $\Gamma^n_{k}(M)$ and $N$, we
have an isomorphism
\begin{equation}Mod_{k}(\Gamma^n_{k}(M),N)\xrightarrow{\cong}
\PP^n_{k}(M,N)\end{equation}
given by
$\varphi\mapsto\varphi\circ\gamma^n$.
\subsubsection{The algebra $\Gamma_{k}^n(A)$}
If $A$ is $k-$algebra then $\Gamma^n_{k}(A)$ inherits a
structure of $k-$algebra by
\begin{equation}\pd a n \star\pd b n=\pd{(ab)}n\end{equation}
The unit in $\Gamma_{k}^n(A)$ is $\pd 1 n$. It was proved by
N.Roby {\cite{Ro2}} that in this way $A\to\Gamma_{k}^n(A)$ gives a
functor from $k-$algebras to $k-$algebras such that
$\gamma^n(a)\gamma^n(b)=\gamma^n(ab)$, $\forall a,b \in A$. Hence
\begin{equation}\label{mp}
\nn_{k}(\Gamma^n_{k}(A),B)\xrightarrow{\cong} \MP^n_{k}(A,B)\end{equation}
and the map given by $\varphi\mapsto\varphi\circ\gamma^n$ is an isomorphism
for all $A,B\in
\nn_{k}$.
\subsubsection{Base change}\label{bc}\cite[Thm. III.3, p. 262]{Ro1}
For any $R\in \C_k$ and $A\in Mod_k$ it holds that
\begin{equation}R\otimes_k\Gamma_k(A)\xrightarrow{\cong}\Gamma_R(R\otimes_k
A).\end{equation}
When $A$ is a $k-$algebra this gives an isomorphism of $R-$algebras
\begin{equation}\label{bcn}
R\otimes_k\Gamma_k^n(A)\xrightarrow{\cong}\Gamma_R^n(R\otimes_k
A)\end{equation}
for all $n\geq 0$.

\subsection{Symmetric tensors}\label{gt}
\begin{definition}\label{symt}
Let $M$ be a $k-$module and consider the $n-$fold tensor power $M^{\otimes n}$.
The symmetric group $S_n$ acts on
$M^{\otimes n}$ by permuting the factors and we denote by $\TS^n_{k}(M)$
or simply by $\TS^n(M)$ the
$k-$submodule of $M^{\otimes n}$ of the invariants for this action. The
elements of $\TS^n(M)$ are called
symmetric tensors of degree $n$ over $M$.
\end{definition}
\begin{remark}
If $M$ is a $k-$algebra then $S_n$ acts on $M^{\otimes n}$ as a group of
$k-$algebra automorphisms. Hence
$\TS^n(M)$ is a $k-$subalgebra of $M^{\otimes n}$.
\end{remark}
\subsubsection{Flatness and Symmetric Tensors}\label{flat}
Suppose $M\in\nn_{k}$ (resp. $M\in\C_{k}$). The homogeneous polynomial law
$M\rightarrow \TS_k^n(M)$ given by $x\mapsto x^{\otimes n}$ gives
a morphism $\tau_n:\Gamma_{k}^n(M)\rightarrow \TS_k^n(M)$ which is
an isomorphism when $M$ is flat over $k$. Indeed one can easily prove
that $\tau_n$ is an isomorphism in case $M$ is free. The flat case then follows because any flat $k-$module is a filtered direct limit of free modules and both $\Gamma^n$ and $TS^n$ commute with filtered direct limits.
\begin{remark}
Divided powers and symmetric tensors are not always isomorphic. See
{\cite{Lu}} for counterexamples.
\end{remark}

\subsection{Generators}\label{gen}

Following \cite{V1} we give here a generating set for $\Gamma_k^n(A)$.

We need a Lemma.
\begin{lemma}\label{basis}
Suppose $M\in Mod_k$ is generated by $\{m_i\}_{i\in I}$ then $\Gamma^n_{k}(M)$ is generated by
$\{\times_{i\in I} \pd {m_i} {\al_i}\;
:\;\mid\al\mid=\sum_i\al_i=n\}$ for all $n\in \NN$.
In particular if $\{m_i\}_{i\in I}$ is a basis of $M$ then $\{\times_{i\in I} \pd {m_i} {\al_i}\;
:\;\mid\al\mid=\sum_i\al_i=n\}$ is a basis of $\Gamma_k^n(M)$ for all $n\in \NN$.
\end{lemma}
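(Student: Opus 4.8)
The plan is to establish the \emph{basis} assertion first, for $M$ free with basis $\{m_i\}_{i\in I}$, and then to deduce the \emph{generation} assertion for an arbitrary generating set by lifting to a free module. For the basis assertion: by \S\ref{flat} the map $\tau_n\colon\Gamma_k^n(M)\to\TS^n_k(M)=(M^{\otimes n})^{S_n}$ induced by the degree-$n$ homogeneous polynomial law $\nu\colon x\mapsto x^{\otimes n}$ is an isomorphism of $k$-modules, so it suffices to prove that $\tau_n$ sends the proposed family $\{\times_{i\in I}\pd{m_i}{\al_i}\colon|\al|=n\}$ onto a $k$-basis of $\TS^n_k(M)$. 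Here $M^{\otimes n}$ is free on the tensors $m_{i_1}\otimes\cdots\otimes m_{i_n}$, the group $S_n$ permutes this basis, its (finite) orbits are indexed by the multi-indices $\al\in\NN^{(I)}$ with $|\al|=n$ (the content of a sequence), and for such a permutation module the invariant submodule is free with basis the orbit sums $\theta_\al:=\sum_v m_{v_1}\otimes\cdots\otimes m_{v_n}$ over the orbit attached to $\al$ — an invariant element has coefficients constant on orbits and of finite support, while distinct $\theta_\al$ have disjoint supports. Thus everything reduces to checking $\tau_n(\times_i\pd{m_i}{\al_i})=\theta_\al$, which is a comparison of coefficients in the sense of Theorem~\ref{roby}: fixing a finite $S\supseteq\operatorname{supp}\al$ and working over $L=k[x_s]_{s\in S}$, relations (3) and (4) of Definition~\ref{divdef} (applied over the base ring $L$ via the base change isomorphism $L\otimes_k\Gamma_k^n(M)\cong\Gamma_L^n(L\otimes_k M)$ of \S\ref{bc}) compute $\pd{\bigl(\sum_s x_s\otimes m_s\bigr)}{n}$, while expanding $\bigl(\sum_s x_s\otimes m_s\bigr)^{\otimes n}$ in $L\otimes_k M^{\otimes n}$ gives the other side, yielding
\[
\gamma^n_L\Bigl(\sum_s x_s\otimes m_s\Bigr)=\sum_{|\xi|=n}x^\xi\otimes\Bigl(\times_s\pd{m_s}{\xi_s}\Bigr),\qquad
\nu_L\Bigl(\sum_s x_s\otimes m_s\Bigr)=\sum_{|\xi|=n}x^\xi\otimes\theta_\xi .
\]
In the second expansion the coefficient of $x^\xi$ is the sum of the \emph{distinct} basis tensors of content $\xi$, which is exactly $\theta_\xi$ with no multinomial factor; applying $\operatorname{id}_L\otimes\tau_n$ to the first identity, using $\tau_n\circ\gamma^n=\nu$ and the $k$-linear independence of the monomials $x^\xi$, one gets $\tau_n(\times_s\pd{m_s}{\xi_s})=\theta_\xi$ for every $\xi$. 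Hence the family is a $k$-basis of $\Gamma_k^n(M)$.

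For a general generating set $\{m_i\}_{i\in I}$ of an arbitrary $M$, let $F=k^{(I)}$ be free on symbols $e_i$ and $p\colon F\twoheadrightarrow M$ the map $e_i\mapsto m_i$. Since $\Gamma_k^n$ preserves surjections (\S\ref{upgamma}), $\Gamma_k^n(p)$ is onto; by the free case $\Gamma_k^n(F)$ is generated by $\{\times_i\pd{e_i}{\al_i}\colon|\al|=n\}$, which $\Gamma_k^n(p)$ carries to $\{\times_i\pd{m_i}{\al_i}\colon|\al|=n\}$, so the latter generates $\Gamma_k^n(M)$. (Alternatively, bypassing the free case: writing $x_j=\sum_i\lambda_{ij}m_i$, relations (3) and (4) rewrite each $\pd{x_j}{\beta_j}$ as a $k$-combination of the $\times_i\pd{m_i}{\kappa_i}$ with $|\kappa|=\beta_j$, and relation (5) collapses $\bigl(\times_i\pd{m_i}{\kappa_i}\bigr)\times\bigl(\times_i\pd{m_i}{\kappa'_i}\bigr)$ to a scalar multiple of $\times_i\pd{m_i}{\kappa_i+\kappa'_i}$; applying this to the generators $\times_j\pd{x_j}{\beta_j}$ with $\sum_j\beta_j=n$ of $\Gamma_k^n(M)$ rewrites each as a $k$-combination of the claimed family.)

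Almost everything above is formal bookkeeping with the defining relations of Definition~\ref{divdef}, Theorem~\ref{roby}, base change, and the elementary invariant theory of permutation modules. The one genuinely non-formal input is the isomorphism $\tau_n$ for free modules quoted from \S\ref{flat}; granting it, the only step that requires real care is the identity $\tau_n(\times_i\pd{m_i}{\al_i})=\theta_\al$, and in particular the verification that no multinomial coefficient intervenes, since it is this identification that transports the transparent description of the invariants of a permutation module into a statement about $\Gamma_k^n(M)$.
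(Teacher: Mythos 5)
Your argument is correct, but it follows a genuinely different route from the paper's. The paper disposes of the generation statement exactly as in your parenthetical alternative (it ``follows directly from the definition'', i.e.\ from relations (3)--(5) of Definition~\ref{divdef}), and then proves linear independence in the free case by duality: by the adjunction of \S\ref{upgamma}, linear forms on $\Gamma^n_k(M)$ are degree-$n$ polynomial laws $M\to k$, and by Remark~\ref{freelaw} these are just homogeneous polynomials of degree $n$ in the coordinates, so one can prescribe arbitrary values on the elements $\times_i\pd{m_i}{\al_i}$; this separates them and gives independence, the infinite-rank case being handled by restricting to finite subsets of the basis. You instead transport the problem across $\tau_n$ to $\TS^n_k(M)$ and match the divided-power monomials with the orbit-sum basis of $(M^{\otimes n})^{S_n}$ via the coefficient computation over $k[x_s]_{s\in S}$; that computation (in particular the absence of multinomial factors) is carried out correctly, and the reduction of the general generation claim to the free module $k^{(I)}$ via surjectivity of $\Gamma^n_k(p)$ is fine. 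What your route buys is an explicit description of the isomorphism $\tau_n$ on bases, which is pleasant extra information; what it costs is that it imports the assertion of \S\ref{flat} that $\tau_n$ is an isomorphism for free modules, which the paper only states as ``easily proved'', whereas the paper's duality argument needs no comparison with symmetric tensors at all. You should be aware that the most common proof of that \S\ref{flat} assertion proceeds precisely by exhibiting the divided-power monomials as a basis of $\Gamma^n_k(M)$ (via $\Gamma(M\oplus N)\cong\Gamma(M)\otimes\Gamma(N)$), so quoting it here risks circularity unless one uses an independent proof, e.g.\ checking directly from Theorem~\ref{roby} that for free $M$ the law $x\mapsto x^{\otimes n}$ into $\TS^n_k(M)$ is itself universal among degree-$n$ homogeneous laws; with that caveat addressed, your proof stands.
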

\begin{proof}
The first assertion follows directly from the definition.

Suppose $M$ is free of finite rank $p$. By Remark \ref{freelaw} it follows that any polynomial law $M\to k$ homogeneous of degree $n$ can be identified with an homogeneous polynomial in $p$ variables $\sum_{h_1+\dots+h_p=n} a_{h_1,\dots,h_p}x_1^{h_1}\cdots x_p^{h_p}\in k[x_1,\dots,x_p]$. This means that one can find a linear form taking arbitrary values on the $\pd {m_1} {\al_1}\times \cdots \times \pd {m_p} {\al_p}$. This argument easily extends to not finitely generated free $k-$modules by applying it to arbitrary finite subsets of a basis.
\end{proof}

\begin{definition}\label{degree}
Let $\m$ denote the set of monomials in $F$. There is a natural
degree $\di$ on $F$ given by $\di(x_i)=1$ for all $i=1,\dots ,m$
and $\di(1)=0$\,. We denote by $\m^+$ the set of monomials of
positive degree. Thus $\m=\m^+\cup\{1\}$\,. We set $F^+$ for the
ideal of $F$ generated by $\m^+$.
\end{definition}

It is clear that $\m$ is a basis of $F$ so that $\m_n=\{\times_{\mu \in \m} \pd {\mu} {\al_{\mu}}\;
:\;\mid\al\mid=n\}$ is a basis of $\Gamma_k^n(M)$ for all $n\in \NN$ by Lemma \ref{basis}.

\begin{proposition}[Product Formula]\label{pdf}
Let $h,k\in\NN$, $\al\in\NN^h$, $\bt\in\NN^k$ be such that $|\al |,\,|\bt| \leq n$. Let
$r_1,\dots,r_h,s_1,\dots,s_k\in F^+$. Then
\begin{multline}\pd 1 {n-\mid\al\mid}\times_{i=1}^h\pd {r_i} {\al_i} \star \pd 1 {n-\mid\bt\mid}\times_{j=1}^k \pd {s_j} {\bt_j}
=\\
=\sum_{\gm} \pd 1 {n-\mid\gm\mid} \times_{i=1}^h \pd {r_i} {\gm_{i0}} \times_{j=1}^k \pd {s_j} {\gm_{0j}}
\overset{h,k}{\underset{i,j=1}{\times}} \pd {(r_i s_j)} {\gm_{ij}}
\end{multline}
where
\[\gamma=(\gamma_{10},\gamma_{20},\dots,\gamma_{h0},\gamma_{01},\ldots,\gamma_{0k},
\gamma_{11},\ldots,\gamma_{1k},\ldots,\gamma_{h1},\ldots,\gamma_{hk})\]
is such that
\begin{equation}\label{sys}
\begin{cases} \gamma_{ij}\in \NN \\
\sum_{i,\,j} \gamma_{ij}\leq n \\
\sum_{j=0}^k \gamma_{ij}=\al_i \;\;{\text{for}}\;\; i=1,\dots,h\\
\sum_{i=0}^h \gamma_{ij}=\bt_j \;\; {\text{for}}\;\; j=1,\dots,k.
\end{cases}
\end{equation}
\end{proposition}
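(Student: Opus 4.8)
The plan is to read the products of divided powers occurring in the statement as coefficients of the universal degree-$n$ homogeneous polynomial law $\gamma^n\colon F\to\Gamma^n_k(F)$ evaluated at generic linear combinations, and then to exploit that $\gamma^n$ is \emph{multiplicative}: the Product Formula will merely express the compatibility of the coefficient bookkeeping with $\gamma^n(xy)=\gamma^n(x)\star\gamma^n(y)$. First, for any finite family $b_0,\dots,b_N\in F$, Theorem \ref{roby}(1) produces coefficients $\gamma^n_\xi((b_p))\in\Gamma^n_k(F)$; iterating relations (3) and (4) of Definition \ref{divdef} and invoking the base change isomorphism $R\otimes_k\Gamma_k(F)\cong\Gamma_R(R\otimes_k F)$ of Section \ref{bc}, one identifies
\[
\gamma^n_\xi((b_p))=\times_{p=0}^{N}\pd{b_p}{\xi_p}\quad\text{if }|\xi|=n,\qquad \gamma^n_\xi((b_p))=0\ \text{otherwise}
\]
(the vanishing being Theorem \ref{roby}(3)). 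Hence, by Theorem \ref{roby}(2), for \emph{any} commutative $k$-algebra $R$ and elements $c_0,\dots,c_h\in R$,
\[
\gamma^n_R\Bigl(c_0\otimes 1+\sum_{i=1}^{h}c_i\otimes r_i\Bigr)=\sum_{|\delta|=n}\Bigl(\prod_{p=0}^{h}c_p^{\delta_p}\Bigr)\otimes\Bigl(\pd 1 {\delta_0}\times_{i=1}^{h}\pd{r_i}{\delta_i}\Bigr),
\]
so that $\pd 1 {n-|\al|}\times_{i=1}^{h}\pd{r_i}{\al_i}$ is the component of $\gamma^n_R(c_0\otimes1+\sum_i c_i\otimes r_i)$ at the monomial $c_0^{\,n-|\al|}c_1^{\al_1}\cdots c_h^{\al_h}$ when the $c_i$ are algebraically independent over $k$; extraction of such a component is legitimate because $R\otimes_k\Gamma^n_k(F)$ is the direct sum of the copies of $\Gamma^n_k(F)$ indexed by the monomials in the $c_i$.

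Now take $R=k[t_0,\dots,t_h,u_0,\dots,u_k]$, set $\al'=(n-|\al|,\al_1,\dots,\al_h)$ and $\bt'=(n-|\bt|,\bt_1,\dots,\bt_k)$, and put in $R\otimes_k F$
\[
X=t_0\otimes 1+\sum_{i=1}^{h}t_i\otimes r_i,\qquad Y=u_0\otimes 1+\sum_{j=1}^{k}u_j\otimes s_j .
\]
Since the algebra product of $R\otimes_k\Gamma^n_k(F)$ obeys $(c\otimes e)\star(d\otimes f)=cd\otimes(e\star f)$, the component of $\gamma^n_R(X)\star\gamma^n_R(Y)$ at the monomial $t^{\al'}u^{\bt'}$ is, by the previous paragraph, exactly the left-hand side of the Product Formula. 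On the other hand, $\gamma^n$ being multiplicative (Section \ref{upgamma}, cf.\ (\ref{mp})) gives $\gamma^n_R(X)\star\gamma^n_R(Y)=\gamma^n_R(XY)$; expanding
\[
XY=\sum_{p=0}^{h}\sum_{q=0}^{k}t_pu_q\otimes w_{pq},\qquad w_{00}=1,\ w_{i0}=r_i,\ w_{0j}=s_j,\ w_{ij}=r_is_j ,
\]
and applying Theorem \ref{roby}(2) to this expression (the index set being $\{0,\dots,h\}\times\{0,\dots,k\}$, the $w_{pq}$ not necessarily distinct) together with the coefficient identification above, one obtains
\[
\gamma^n_R(XY)=\sum_{|\eta|=n}\Bigl(\prod_{p,q}(t_pu_q)^{\eta_{pq}}\Bigr)\otimes\Bigl(\pd 1 {\eta_{00}}\times_{i}\pd{r_i}{\eta_{i0}}\times_{j}\pd{s_j}{\eta_{0j}}\times_{i,j}\pd{(r_is_j)}{\eta_{ij}}\Bigr).
\]

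It then remains to extract from this last display the component at $t^{\al'}u^{\bt'}$ and to match it with the Product Formula. The monomial $\prod_{p,q}(t_pu_q)^{\eta_{pq}}$ equals $t^{\al'}u^{\bt'}$ precisely when $\sum_{q}\eta_{pq}=\al'_p$ for all $p$ and $\sum_{p}\eta_{pq}=\bt'_q$ for all $q$. Writing $\gm_{pq}:=\eta_{pq}$ for $(p,q)\neq(0,0)$, the equations for $p\geq 1$ and for $q\geq 1$ are exactly the last two families of (\ref{sys}); the condition $|\eta|=n$ with $\eta_{00}\geq 0$ amounts to $|\gm|=\sum_{p,q}\gm_{pq}\leq n$ with $\eta_{00}=n-|\gm|$; and the two remaining marginal equations (for $p=0$ and for $q=0$) are automatically implied, e.g.\ the $p=0$ equation reads $\sum_{q\geq 1}\gm_{0q}=|\gm|-|\al|$, which follows by summing the $p\geq 1$ equations. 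Substituting $\eta_{00}=n-|\gm|$ turns the component at $t^{\al'}u^{\bt'}$ into the right-hand side of the Product Formula, and equating it with the left-hand side already identified in $\gamma^n_R(X)\star\gamma^n_R(Y)$ finishes the proof.

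The one step requiring care is this final matching: the scalars $t_pu_q$ are far from algebraically independent in $R$ (for example $t_1u_2\cdot t_2u_1=t_1u_1\cdot t_2u_2$), so one cannot read off the coefficient of $t^{\al'}u^{\bt'}$ termwise in the sum over $\eta$ but must first group the summands according to the monomial each contributes, and one must verify the redundancy of the $p=0$ and $q=0$ constraints so that the relevant index set is exactly the solution set of (\ref{sys}). The rest is routine unwinding of the universal property of $\Gamma^n_k$ and of Theorem \ref{roby}; the hypothesis $r_i,s_j\in F^{+}$ intervenes only to ensure that the divided-power monomials in question lie in the basis $\m_n$ of $\Gamma^n_k(F)$ provided by Lemma \ref{basis}, the identity itself being valid for arbitrary $r_i,s_j\in F$.
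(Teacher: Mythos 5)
Your proof is correct and follows essentially the same route as the paper's: both rest on the multiplicativity of the universal law over a polynomial ring of auxiliary commuting variables (the paper's identity (\ref{aperb}), which is your $\gamma^n_R(X)\star\gamma^n_R(Y)=\gamma^n_R(XY)$), expanded by the divided-power addition and scalar relations and then compared coefficientwise. You simply carry out in full generality, with the $t_0,u_0$ homogenization, the grouping of the non-independent scalars $t_pu_q$, and the check that the marginal constraints are redundant, the bookkeeping that the paper compresses into ``the desired equation then easily follows.''
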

\begin{proof}
Let $t_1,t_2$ be two commuting independent variables and let $a,b\in F^+$. We have
\begin{equation}\label{aperb}
\pd {(1+t_1\otimes a)} n \star \pd {(1+t_2\otimes b)} n=\pd {(1+t_1\otimes a+t_2\otimes b+t_1t_2\otimes ab)} n\end{equation} hence
\begin{multline*}
(\pd 1 n + \sum_{i=1}^n t_1^i\otimes \pd 1 {n-i} \times \pd a i)
\star(\pd 1 n + \sum_{j=1}^n t_2^j \otimes \pd 1 {n-j} \times \pd b j)=\\
=\pd 1 n + \sum_{i,j}t_1^it_2^j\otimes \pd 1 {n-i} \times \pd a i\star \pd 1 {n-j} \times \pd b j=\\
=\pd 1 n +\sum _{l_1,\,l_2,\,l_{12}}t_1^{l_1+l_{12}}t_2^{l_2+l_{12}}\otimes
\pd 1 {n-\mid\gm\mid} \times_{i=1}^h \pd {r_i} {\gm_{i0}} \times_{j=1}^k \pd {s_j} {\gm_{0j}}
\overset{h,k}{\underset{i,j=1}{\times}} \pd {(r_i s_j)} {\gm_{ij}}
\end{multline*}
The desired equation then easily follows.
\end{proof}
\begin{remark}
The product formula could be more easily visualized by observing that we are summing over those matrices of positive integers
\[\overline{\gamma }=
\left(
  \begin{array}{cccc}
    0& \gamma_{01} &\ldots & \gamma_{0k}\\
    \gamma_{10} &\gamma_{11} & \ldots& \gamma_{1k}\\
    \vdots &\vdots & \ddots& \vdots\\
    \gamma_{h0} & \gamma_{h1} &\ldots& \gamma_{hk} \\
  \end{array}
\right)
\]
having the last $h$ rows and the last $k$ columns with sums $\al_1,\dots,\al_h$ and $\bt_1,\dots,\bt_k$ respectively.
\end{remark}
\begin{remark}
The above Product Formula can be derived from the one found by N.Roby in the context of divided powers (see {\cite{Ro1}} ). It has also been  derived by D.Ziplies in his paper on the divided powers algebra $\widehat{\Gamma}$ (see\cite{zipgen} ).
\end{remark}
\begin{corollary}\label{subring} Let $k\in \NN$, $a_1,\dots ,a_k\in F^+$,
$\al=(\al_1,\dots,\al_k)\in \NN^k$ with $|\al|\leq n$. Then $\pd 1 {n-\mid\al\mid}\times_{i=1}^k \pd {a_i}{\al_i}$ belongs to
the subalgebra of\, $\Gamma_k^n(F)$ generated by the $\pd 1 {n-i}\times\pd {\mu} i$, where $i=1,\dots,n$ and $\mu$ is a monomial in
the $a_1,\dots,a_k$.
\end{corollary}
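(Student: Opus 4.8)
The plan is to argue by strong induction on the total weight $N=\mid\al\mid=\sum_i\al_i$, keeping $n$ fixed and letting the length $k$ and the tuple $(a_1,\dots,a_k)$ vary (the statement is vacuous unless $N\le n$, so we may assume this). Throughout, let $\Lambda$ denote the subalgebra of $\Gamma_k^n(F)$ generated by the elements $\pd 1{n-i}\times\pd\mu i$ with $1\le i\le n$ and $\mu$ a monomial in $a_1,\dots,a_k$. A preliminary observation that makes the induction go through: if we enlarge the tuple $(a_1,\dots,a_k)$ by appending finitely many further monomials in the $a_i$, the associated subalgebra is still $\Lambda$, because every monomial in the enlarged tuple is already a monomial in $a_1,\dots,a_k$. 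For $N=0$ the element in question is $\pd 1 n$, the unit of $\Gamma_k^n(F)$, which belongs to $\Lambda$; for $N=1$ it is $\pd 1{n-1}\times\pd{a_{i_0}}1$ for a single index $i_0$, i.e.\ one of the generators of $\Lambda$. These are the base cases; they also settle the case $k=1$, where the element is always $\pd 1{n-\al_1}\times\pd{a_1}{\al_1}$, a generator.

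For the inductive step take $N\ge2$, choose $i_0$ with $\al_{i_0}\ge1$, relabel so that $i_0=1$, and put $p=\al_1$. I would apply the Product Formula (Proposition \ref{pdf}) with a single left factor $r_1=a_1$ of exponent $p$ and right factors $a_2,\dots,a_k$ of exponents $\al_2,\dots,\al_k$ to the product
\[
u:=\Bigl(\pd 1{n-p}\times\pd{a_1}{p}\Bigr)\star\Bigl(\pd 1{n-(N-p)}\times_{i=2}^k\pd{a_i}{\al_i}\Bigr).
\]
The left factor is one of the generators of $\Lambda$ (note $1\le p\le N\le n$), while the right factor has total weight $N-p<N$, so it lies in $\Lambda$ by the induction hypothesis applied to the shorter tuple $(a_2,\dots,a_k)$; hence $u\in\Lambda$. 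The Product Formula expands $u$ as a sum, over nonnegative integers $\gm_{10},\gm_{0j},\gm_{1j}$ (with $j=2,\dots,k$) subject to $\gm_{10}+\sum_{j\ge2}\gm_{1j}=p$, $\gm_{0j}+\gm_{1j}=\al_j$, and $\gm_{10}+\sum_{j\ge2}(\gm_{0j}+\gm_{1j})\le n$, of the terms
\[
\pd 1{n-M}\times\pd{a_1}{\gm_{10}}\times_{j=2}^k\pd{a_j}{\gm_{0j}}\times_{j=2}^k\pd{(a_1a_j)}{\gm_{1j}},\qquad M:=\gm_{10}+\sum_{j\ge2}(\gm_{0j}+\gm_{1j}).
\]
Exactly one of these terms, the one with all $\gm_{1j}=0$ (which forces $\gm_{10}=p$, $\gm_{0j}=\al_j$, $M=N$), equals the element $\pd 1{n-N}\times_{i=1}^k\pd{a_i}{\al_i}$ we want; solving for it writes it as $u$ minus the sum of the remaining terms.

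It then remains to check that each remaining term is again of the form handled by the induction, but of strictly smaller weight. Such a term is $\pd 1{n-M}$ times a product of divided powers of monomials drawn from the enlarged tuple $(a_1,\dots,a_k,a_1a_2,\dots,a_1a_k)$ (it is not necessary to merge coincident monomials via the shuffle relation (5) of Definition \ref{divdef}, since both the statement and the induction hypothesis allow repetitions in the tuple), and a short computation using the two equality constraints gives
\[
M=\gm_{10}+\sum_{j\ge2}\al_j=N-\sum_{j\ge2}\gm_{1j}<N,
\]
the strict inequality holding because at least one $\gm_{1j}$ is positive in a remaining term. By the induction hypothesis applied to this enlarged tuple, each remaining term lies in the subalgebra generated by the $\pd 1{n-i}\times\pd\nu i$ with $\nu$ a monomial in $a_1,\dots,a_k,a_1a_2,\dots,a_1a_k$, which by the preliminary observation is exactly $\Lambda$. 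Hence the target element equals $u$ minus a sum of elements of $\Lambda$, so it belongs to $\Lambda$, closing the induction. The main point requiring care is the bookkeeping in the last displayed identity for $M$: keeping straight which entries of the summation index are constrained, and why the absence of the $a_1a_j$-contributions in the leading term forces every other term to have smaller weight. Once Proposition \ref{pdf} is in hand, everything else is routine.
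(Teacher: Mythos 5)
Your proposal is correct and follows essentially the same route as the paper: induction on $\mid\al\mid$, applying the Product Formula to $\bigl(\pd 1{n-\al_1}\times\pd{a_1}{\al_1}\bigr)\star\bigl(\pd 1{n-(\al_2+\dots+\al_k)}\times_{i=2}^k\pd{a_i}{\al_i}\bigr)$, isolating the target term (all $\gm_{1j}=0$), and using the weight drop $\mid\gm\mid=\mid\al\mid-\sum_{j\ge2}\gm_{1j}<\mid\al\mid$ to absorb the cross terms, whose monomials $a_1a_j$ are still monomials in the $a_i$. Your explicit remarks on enlarging the tuple and on the base cases only make precise what the paper leaves implicit.
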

\begin{proof}
We prove the claim by induction on $|\al|$ assuming that $\al_i>0$ for all $i$ (note that $1 \leq k\leq
\sum_j\al_j$). Since $n$ is fixed we suppress the superscript $n$ for all the proof.

If $\sum_j\al_j=1$ then $k=1$ and $\pd 1 {n-\mid\al\mid}\times_{i=1}^k \pd {a_i}{\al_i}=\pd 1 {n-1}\times \pd {a_1} 1$. Suppose the claim true for
all $\pd 1 {n-\mid\bt\mid}\times_{i=1}^h \pd {b_i}{\bt_i}$ with $b_1,\dots,b_h\in F^+$ and $|\bt| < |\al|$.

Let $k, a_1,\dots ,a_k, \al$ be as in the statement, then we have by the Product Formula
\begin{multline}
\pd 1 {n-1}\times \pd {a_1} {\al_1}\star \pd 1 {n-(\al_2+\dots+\al_k)}\times_{i=2}^k \pd {a_i}{\al_i} = \pd 1 {n-\mid\al\mid}\times_{i=1}^k \pd {a_i}{\al_i}+\\
\sum_{\gm} \pd 1 {n-\mid\gm\mid} \pd {a_1} {\gm_{10}} \times_{j=2}^k \pd {a_j} {\gm_{0j}} \times_{l=2}^k \pd {(a_1 a_l)} {\gm_{1l}}
\end{multline}
where
\[\gamma=(\gamma_{10},\gamma_{02},\dots,\gamma_{0k},\gamma_{12},\dots,\gamma_{1k})\]
with $\gamma_{10}+\gamma_{12}+\dots +\gamma_{1k}=\al_1$ with $\sum_{l=2}^k \gamma_{1l}>0$, and
$\gamma_{0j}+\gamma_{1j}=\al_j$ for $j=2,\dots,k$.
Thus
\[\gamma_{10}+\gamma_{02}+\dots+\gamma_{0k}+\gamma_{12}+\dots+\gamma_{1k}
=\sum_j\al_j - \sum_{l=2}^k \gamma_{1j}<\sum_j\al_j.\] Hence
\begin{multline} \pd 1 {n-\mid\al\mid}\times_{i=1}^k \pd {a_i}{\al_i}=\pd 1 {n-1}\times \pd {a_1} {\al_1}\star \pd 1 {n-(\al_2+\dots+\al_k)}\times_{i=2}^k \pd {a_i}{\al_i}\\ - \sum_{\gm} \pd 1 {n-\mid\gm\mid} \pd {a_1} {\gm_{10}} \times_{j=2}^k \pd {a_j} {\gm_{0j}} \times_{l=2}^k \pd {(a_1 a_l)} {\gm_{1l}}
\end{multline}
where $|\gm|=\sum_{r,s}\gm_{rs}<|\al|$. So the
claim follows by the induction hypothesis.
\end{proof}
\begin{corollary}\label{gen}
The algebra $\Gamma_k^n(F)$ is generated by the $\pd 1 {n-i}\times \pd {\mu} i$ where $1\leq i\leq n$
and $\mu\in\m^+$.
\end{corollary}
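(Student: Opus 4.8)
The plan is to deduce this from the explicit basis of $\Gamma_k^n(F)$ furnished by Lemma~\ref{basis} together with Corollary~\ref{subring}. Since $\m$ is a $k$-basis of $F$, Lemma~\ref{basis} gives that $\m_n=\{\times_{\mu\in\m}\pd{\mu}{\al_\mu}:|\al|=n\}$ is a $k$-basis of $\Gamma_k^n(F)$; in particular it spans $\Gamma_k^n(F)$ as a $k$-module. Hence, writing $S$ for the $k$-subalgebra of $\Gamma_k^n(F)$ generated by the elements $\pd 1{n-i}\times\pd{\mu}{i}$ with $1\le i\le n$ and $\mu\in\m^+$, it is enough to show that every element of $\m_n$ lies in $S$.

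So I would take a basis element $\times_{\mu\in\m}\pd{\mu}{\al_\mu}$ with $|\al|=n$ and bring it into the shape handled by Corollary~\ref{subring}. Splitting off the factor indexed by the monomial $1\in\m$ and setting $\al'=(\al_\mu)_{\mu\in\m^+}$, the homogeneity constraint $|\al|=n$ forces the exponent of the $1$-factor to be $\al_1=n-|\al'|$, with $|\al'|\le n$; thus the basis element is exactly $\pd 1{\,n-|\al'|\,}\times_{\mu\in\m^+}\pd{\mu}{\al_\mu}$. If $\al'=0$ this is the unit $\pd 1 n$, which lies in $S$ trivially. Otherwise, listing the finitely many $\mu\in\m^+$ with $\al_\mu>0$ as $a_1,\dots,a_\ell\in F^+$, the element is $\pd 1{\,n-|\al'|\,}\times_{i=1}^\ell\pd{a_i}{\al'_i}$, precisely the type of element to which Corollary~\ref{subring} applies.

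By Corollary~\ref{subring} this element lies in the subalgebra generated by the $\pd 1{n-i}\times\pd{\nu}{i}$ with $1\le i\le n$ and $\nu$ a monomial in $a_1,\dots,a_\ell$. The one observation needed to finish is that a monomial in the $a_i$ is a product of monomials of $F$ of positive degree, hence again an element of $\m^+$; therefore each such generator is already among the prescribed generators of $S$. Consequently every element of $\m_n$ lies in $S$, so $S=\Gamma_k^n(F)$, as claimed. I do not expect any real obstacle here: the corollary is essentially a bookkeeping consequence of the two earlier results, and the only points requiring a moment's care are the translation of the degree constraint $|\al|=n$ into the exponent $n-|\al'|$ on the $\pd 1{\cdot}$ factor and the remark that monomials in elements of $\m^+$ stay in $\m^+$.
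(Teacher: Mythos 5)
Your proposal is correct and follows the paper's own argument: the paper proves this corollary precisely by applying Corollary~\ref{subring} to the elements of the basis $\m_n$ furnished by Lemma~\ref{basis}, and your write-up simply fills in the bookkeeping (splitting off the $\pd 1{n-|\al'|}$ factor and noting that monomials in elements of $\m^+$ remain in $\m^+$) that the paper leaves implicit.
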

\begin{proof}
It follows from Corollary \ref{subring} applied to the elements of the basis $\m_n$.
\end{proof}
\begin{remark}
The above corollaries can also be proved using Corollary (4.1) and (4.5) in \cite{zipgen}.
\end{remark}
\begin{lemma}\label{ple} For all $f\in F^+$, and $k,h\in \NN$,
$\pd 1 {n-h} \times \pd {(f^k)} h$ belongs to the subalgebra of $\Gamma_k^n(F)$ generated by the $\pd 1 {n-j} \times \pd f j$.
\end{lemma}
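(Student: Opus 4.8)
The plan is to reduce the claim about $\pd 1 {n-h}\times\pd{(f^k)}h$ to the $k=1$ case by an induction on $k$, using the shuffle relation (relation~\ref{shuf} in Definition~\ref{divdef}) together with the Product Formula (Proposition~\ref{pdf}). Fix $n$ and suppress the superscript $n$ throughout, and write $\Gamma$ for the subalgebra of $\Gamma_k^n(F)$ generated by all $\pd 1 {n-j}\times\pd f j$, $j=0,\dots,n$. The base case $k=1$ is trivial: $\pd 1 {n-h}\times\pd{(f^1)}h=\pd 1{n-h}\times\pd f h\in\Gamma$ by definition.

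For the inductive step, suppose the result holds for all exponents $<k$; I want to show $\pd 1{n-h}\times\pd{(f^k)}h\in\Gamma$ for every $h$. The idea is to apply the Product Formula with the two factors $\pd 1{n-h'}\times\pd{(f^{k-1})}{h'}$ and $\pd 1{n-h''}\times\pd f{h''}$ (both single-variable, so $h=h'=1$-style data with $r_1=f^{k-1}$, $s_1=f$). The Product Formula gives
\begin{equation}
\pd 1{n-h'}\times\pd{(f^{k-1})}{h'}\star\pd 1{n-h''}\times\pd f{h''}
=\sum_{\gm}\pd 1{n-\abs{\gm}}\times\pd{(f^{k-1})}{\gm_{10}}\times\pd f{\gm_{01}}\times\pd{(f^k)}{\gm_{11}},
\end{equation}
where the sum runs over $\gm_{10},\gm_{01},\gm_{11}\in\NN$ with $\gm_{10}+\gm_{11}=h'$, $\gm_{01}+\gm_{11}=h''$, and $\gm_{10}+\gm_{01}+\gm_{11}\le n$. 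The left-hand side lies in $\Gamma$ by the inductive hypothesis (for $f^{k-1}$) and the definition (for $f$). On the right-hand side, the term with the largest value of $\gm_{11}$ — achieved by taking $h'=h''=h$, forcing $\gm_{10}=\gm_{01}=0$ and $\gm_{11}=h$ as the unique summand with $\gm_{11}=h$ — is exactly $\pd 1{n-h}\times\pd{(f^k)}h$ (recall $\pd{(f^{k-1})}0=\pd f0=1$ and relation~(2) of Definition~\ref{divdef}). Every other summand has $\gm_{11}<h$, and its $\pd{(f^{k-1})}{\gm_{10}}$ and $\pd{f}{\gm_{01}}$ factors, together with $\pd{(f^k)}{\gm_{11}}$ and the leading $\pd 1{n-\abs\gm}$, need to be shown to be in $\Gamma$; one then solves for $\pd 1{n-h}\times\pd{(f^k)}h$.

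The subtlety — and the step I expect to be the main obstacle — is that the "smaller" summands on the right are not literally of the form $\pd 1{n-j}\times\pd{(f^k)}j$; they are products $\pd 1{n-\abs\gm}\times\pd{(f^{k-1})}{\gm_{10}}\times\pd f{\gm_{01}}\times\pd{(f^k)}{\gm_{11}}$ involving three powers of $f$ at once. So the induction cannot be a naive induction on $k$ alone; I would instead induct on the pair $(k,h)$ lexicographically, or better, prove the stronger statement that \emph{every} element $\pd 1{n-\abs\al}\times\pd{(f^{d_1})}{\al_1}\times\cdots\times\pd{(f^{d_r})}{\al_r}$ (arbitrarily many powers of the single element $f$) lies in $\Gamma$, by induction on $\max_i d_i$ and, within that, on $\abs\al$. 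The shuffle relation~\ref{shuf} collapses repeated occurrences of the same power $\pd{(f^d)}i\times\pd{(f^d)}j=\binom{i+j}i\pd{(f^d)}{i+j}$, so one may assume the $d_i$ are distinct; then Corollary~\ref{subring}, applied with the $a_i=f^{d_i}$, already tells us such an element lies in the subalgebra generated by the $\pd 1{n-i}\times\pd\mu i$ for $\mu$ a monomial in the $f^{d_i}$ — but every such monomial is again a power $f^d$ of $f$. This bootstraps the whole thing down to powers $f^d$ with $d$ strictly smaller (in the relevant sense) than before, closing the induction and reducing everything to $k=1$. Once the stronger statement is in place, the original Lemma is the special case $r=1$, $d_1=k$, $\al_1=h$.
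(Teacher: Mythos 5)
The gap is in your bootstrap step. Corollary \ref{subring}, applied with $a_i=f^{d_i}$, places $\pd 1 {n-\abs{\al}}\times_{i}\pd {(f^{d_i})} {\al_i}$ in the subalgebra generated by the $\pd 1 {n-j}\times\pd {\mu} j$ with $\mu$ a monomial in the $f^{d_i}$; but such a monomial is $f^{e}$ with $e$ a sum, with repetitions, of the $d_i$, so $e\geq \min_i d_i$ and in general $e$ is \emph{larger} than $\max_i d_i$ (for instance $f^{2d_r}$ occurs). The exponents do not go down, so the induction on $\max_i d_i$ never closes; in particular, for a single factor the corollary is vacuous, since $\pd 1 {n-h}\times\pd {(f^k)} h$ is itself one of the listed generators (take $\mu=f^k$, $j=h$), and no reduction toward $k=1$ takes place. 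This is not an accident: the Product Formula and Corollary \ref{subring} assemble divided powers of longer monomials out of divided powers of the factors, whereas the Lemma asks for the opposite direction, decomposing $\pd {(f^k)} h$ into divided powers of $f$ itself. Your displayed application of the Product Formula is correct, and your diagnosis that the leftover terms force a stronger statement is right; what is false is the claim that the resulting exponents are "strictly smaller in the relevant sense".

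For comparison, the paper closes exactly this gap by invoking symmetric functions rather than the Product Formula: the evaluation $x\mapsto f$ induces $\rho_f:\Gamma_k^n(k[x])\to\Gamma_k^n(F)$, and $\Gamma_k^n(k[x])\cong\TS_k^n(k[x])\cong k[x_1,\dots,x_n]^{S_n}$, under which $\pd 1 {n-j}\times\pd x j$ corresponds to $e_j$ and $\pd 1 {n-h}\times\pd {(x^k)} h$ to the plethysm $e_h\cdot p_k=e_h(x_1^k,\dots,x_n^k)$. Since the $e_i$ generate the symmetric polynomials, $e_h\cdot p_k=P_{h,k}(e_1,\dots,e_n)$, and applying $\rho_f$ gives $\pd 1 {n-h}\times\pd {(f^k)} h=P_{h,k}(\pd 1 {n-1}\times\pd f 1,\dots,\pd f n)$. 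If you insist on a purely inductive argument in $\Gamma_k^n(F)$, you would essentially have to reprove the fundamental theorem of symmetric polynomials there, e.g.\ by ordering the exponent data lexicographically and subtracting a suitable $\star$-monomial in the $\pd 1 {n-j}\times\pd f j$ at each step; your current reduction does not do this.
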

\begin{proof}
Let $f\in F^+$. The evaluation $k[x]\rightarrow F$ induces an homomorphism
$\rho_f:\Gamma_k^n(k[x])\to \Gamma_k^n(F)$
Now it is clear that
\[\Gamma_k^n(k[x])\cong \TS_k^n(k[x]) \cong k[x_1,\dots,x_n]^{S_n}\]
so that
\[(1+t\otimes x)^{(n)}\mapsto (1+t\otimes x)^{\otimes n} \mapsto \prod_{i=1}^n(1+t\otimes x_i)=1+\sum_{i=1}^n t^i\otimes e_i(x_1,\dots,x_n)\]
where the $e_i(x_1,\dots,x_n)$ are the elementary symmetirc functions and $t$ is and independent variable.
We have another distinguished kind of functions in the ring of symmetric polynomials $k[x_1,\dots,x_n]^{S_n}$ beside the elementary symmetric ones: the
\emph{power sums}. For $r\geq 1$ the $r$-th power sum is
\begin{equation}\label{pow}
p_r=\sum_{i\geq 1}x_i^r\end{equation}
Let $g\in k[x_1,\dots,x_n]^{S_n}$, set $g\cdot p_r=g(x_1^r,x_2^r,\dots,x_n^r)$ for the plethysm of $g$ and $p_r$ (see Section
I.8 of \cite{M}). The function $g \cdot p_r$ is again symmetric. Since the $e_i$ freely generate $k[x_1,\dots,x_n]^{S_n}$ we
have that $g\cdot p_r$ can be expressed as a polynomial in the $e_i$ and we denote it by
\begin{equation} P_{h,k}=e_h\cdot p_k\label{plet}\end{equation}
It follows that
\begin{multline}\pd 1 {n-h}\times \pd {(f^k)} h=\rho_f(e_h\cdot p_k)=\rho_f(P_{h,k}(e_1,e_2\dots,e_n))=\\
=P_{h,k}(\pd 1 {n-1}\times \pd f 1,\pd 1 {n-2}\times\pd f 2,\dots,\pd f n)\end{multline}
and the result is proved.
\end{proof}
\begin{definition}
A monomial $\mu\in\m^+$ is called \emph{primitive} if it is not the proper power of another one.
\end{definition}
\begin{example}$x_1x_2x_1x_2$ is not primitive while $x_1x_2x_1x_1$ is
primitive.
\end{example}
We have then the following refinement of Corollary \ref{gen}.
\begin{theorem}[Generators]\label{prigen}
The algebra $\Gamma_k^n(F)$ is generated by $\pd 1 {n-i}\times\pd {\mu} i$ with $1\leq i \leq n$ and $\mu$ primitive.
\end{theorem}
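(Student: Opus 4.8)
The plan is to combine Corollary~\ref{gen} with Lemma~\ref{ple} in essentially one move. By Corollary~\ref{gen} we already know that $\Gamma_k^n(F)$ is generated by the elements $\pd 1 {n-i}\times\pd{\mu}i$ as $i$ ranges over $1,\dots,n$ and $\mu$ ranges over \emph{all} monomials of positive degree. So it suffices to show that each such generator lies in the subalgebra generated by the corresponding elements attached to \emph{primitive} monomials. First I would write an arbitrary $\mu\in\m^+$ uniquely as $\mu=\nu^k$ with $\nu$ primitive and $k\geq 1$: indeed, among all monomials $\nu$ with $\mu$ a power of $\nu$ there is one of minimal degree, and minimality forces it to be primitive, while the free monoid on $x_1,\dots,x_m$ has unique such roots (if $\mu=\nu_1^{k_1}=\nu_2^{k_2}$ then $\nu_1$ and $\nu_2$ commute as words, hence are powers of a common word, which by minimality must be $\nu_1=\nu_2$).

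With this factorization $\mu=\nu^k$ in hand, Lemma~\ref{ple} applied to $f=\nu\in F^+$ tells us precisely that $\pd 1 {n-i}\times\pd{(\nu^k)}i=\pd 1 {n-i}\times\pd{\mu}i$ belongs to the subalgebra of $\Gamma_k^n(F)$ generated by the elements $\pd 1 {n-j}\times\pd{\nu}j$ for $j=1,\dots,n$. Since $\nu$ is primitive, these are among the claimed generators. Running this over all $i$ and all $\mu\in\m^+$ shows that every generator furnished by Corollary~\ref{gen} lies in the subalgebra generated by $\{\pd 1 {n-i}\times\pd{\nu}i : 1\leq i\leq n,\ \nu \text{ primitive}\}$, which therefore equals all of $\Gamma_k^n(F)$.

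I do not expect a serious obstacle here: the two hard inputs (the product-formula bookkeeping in Corollary~\ref{subring}/\ref{gen}, and the plethysm-with-power-sums argument in Lemma~\ref{ple}) have already been carried out. The only point requiring a word of care is the uniqueness of the primitive root $\nu$ of a monomial $\mu$ in the free monoid $\m$ — this is a standard fact about free monoids (commuting words in a free monoid are powers of a common word), and it is what makes the assignment $\mu\mapsto\nu$ well defined, though in fact we do not even need uniqueness for the theorem, only existence of \emph{some} primitive $\nu$ with $\mu=\nu^k$. So the proof is just: factor, invoke Lemma~\ref{ple}, invoke Corollary~\ref{gen}.
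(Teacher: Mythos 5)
Your proposal is correct and follows essentially the same route as the paper, which proves Theorem~\ref{prigen} exactly by combining Corollary~\ref{gen} with Lemma~\ref{ple}; you merely make explicit the factorization $\mu=\nu^k$ with $\nu$ primitive, which the paper leaves implicit. The aside on uniqueness of primitive roots in the free monoid is fine but, as you note, unnecessary for the argument.
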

\begin{proof}
It follows from Corollary~\ref{gen} and Lemma~\ref{ple}.
\end{proof}

\subsection{The abelianization} In this paragraph we introduce the
abelianization functor and we prove some of its properties. Then
we deduce the fact that the abelianization of divided powers
commutes with base change.
\begin{definition}
Given a $k-$algebra $A$ we denote by $[A]$ the two-sided ideal of
$A$ generated by the commutators $[a,b]=ab-ba$ with $a,b\in A$. We
write
\[A^{ab}=A/[A]\]
and call it the abelianization of $A$.

We denote by $\mathfrak{ab}_A$ the surjective homomorphism $\mathfrak{ab}_A:A\to A^{ab}$\,.

For $A,B\in \nn_k$ and $f\in\nn_k(A,B)$ we denote by $f^{ab}\in \C_k(A^{ab},B^{ab})$ the abelianization of $f$.
\end{definition}
We collect some facts regarding this construction.
\begin{proposition}\label{abefun}
\begin{enumerate}
\item For all $B\in \C_k$ there is an isomorphism $\C_k(A^{ab},B)\to \nn_k(A,B)$ by means of $\rho\mapsto\rho\cdot\mathfrak{ab}_A$. Equivalently for all $\varphi\in\nn_k(A,B)$ there is a unique $\overline{\varphi}:A^{ab}\to B$ such that the following diagram commutes
    \[
\xymatrix{
  A \ar[dr]_{\varphi} \ar[r]^{\mathfrak{ab}_A}
                & A^{ab} \ar[d]^{\overline{\varphi}}  \\
                & B             }
\]
\item The assignment $A\to A^{ab}$ induces a covariant functor $\nn_k\to\C_k$ that preserves surjections.
    \end{enumerate}
\end{proposition}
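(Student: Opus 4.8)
The plan is to recognize Proposition~\ref{abefun} as the assertion that $(-)^{ab}$ is left adjoint to the inclusion functor $\C_k\hookrightarrow\nn_k$, so that both statements fall out of the universal property of the quotient $A/[A]$.

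For part (1), I would first record that $A^{ab}$ really is commutative: since the two-sided ideal $[A]$ contains every $ab-ba$, the classes of any two elements commute in $A/[A]$, so $A^{ab}\in\C_k$ and $\mathfrak{ab}_A$ is an honest morphism of $\nn_k$. Next, given $\varphi\in\nn_k(A,B)$ with $B$ commutative, I claim $[A]\subseteq\ker\varphi$. Indeed $\varphi(ab-ba)=\varphi(a)\varphi(b)-\varphi(b)\varphi(a)=0$ because $B$ is commutative, and $\ker\varphi$ is a two-sided ideal, hence contains the ideal generated by all such commutators, namely $[A]$. The universal property of the quotient then produces a unique $k$-algebra map $\overline{\varphi}\colon A^{ab}\to B$ with $\overline{\varphi}\circ\mathfrak{ab}_A=\varphi$. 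Conversely $\rho\mapsto\rho\circ\mathfrak{ab}_A$ carries $\C_k(A^{ab},B)$ into $\nn_k(A,B)$; it is injective because $\mathfrak{ab}_A$ is surjective, and surjective by the factorization just obtained. Hence it is a bijection, visibly natural in $B$.

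For part (2), given $f\in\nn_k(A,B)$ I would apply part (1) to the composite $\mathfrak{ab}_B\circ f\colon A\to B^{ab}$ (legitimate since $B^{ab}\in\C_k$), obtaining the unique $f^{ab}\in\C_k(A^{ab},B^{ab})$ with $f^{ab}\circ\mathfrak{ab}_A=\mathfrak{ab}_B\circ f$. The identities $(\mathrm{id}_A)^{ab}=\mathrm{id}_{A^{ab}}$ and $(g\circ f)^{ab}=g^{ab}\circ f^{ab}$ follow from the uniqueness clause of part (1) applied to $\mathfrak{ab}_C\circ g\circ f$, so $(-)^{ab}$ is a functor. Finally, if $f$ is surjective then so is $\mathfrak{ab}_B\circ f$, hence so is $f^{ab}\circ\mathfrak{ab}_A$; since the image of this composite is contained in the image of $f^{ab}$, the latter must be all of $B^{ab}$, i.e.\ $f^{ab}$ is surjective.

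The whole argument is formal; the only step that needs a moment's attention is the passage from ``$\varphi$ kills the generating commutators'' to ``$\varphi$ kills the ideal $[A]$'', which is exactly why $[A]$ is taken to be the two-sided ideal they generate rather than a mere subset. I do not anticipate any genuine obstacle.
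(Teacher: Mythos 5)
Your proof is correct and follows essentially the same route as the paper, which simply invokes the universal property of the quotient by the commutator ideal (calling part (1) obvious) and, for part (2), the containments $f([A])\subset[B]$ and $[B]=f([A])$ when $f$ is surjective; you merely spell out the details the paper leaves implicit, e.g.\ that $\varphi$ killing the generating commutators forces $[A]\subseteq\ker\varphi$, and you obtain surjectivity of $f^{ab}$ from the surjectivity of the composite $f^{ab}\circ\mathfrak{ab}_A$ rather than from the equality $[B]=f([A])$, which are interchangeable one-line arguments.
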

\begin{proof}
The first point is obvious.
For $A,B\in \nn_k$ and $f\in\nn_k(A,B)$ we have that $f([A])\subset [B]$. This proves the functoriality.
Let $A,B\in \nn_k$ and suppose $f\in\nn_k(A,B)$ to be surjective. We have that $[B]=f([A])$.
\end{proof}
\begin{definition}
We call the just introduced functor the {\textit{abelianization functor}}.
\end{definition}
\begin{proposition}\label{abcom}
For all $A\in\nn_k$ and all $R\in\C_k$ it holds that
\[(R\otimes_k A)^{ab}\cong R\otimes_k A^{ab}\]
\end{proposition}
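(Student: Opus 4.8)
The plan is to show directly that the $k$-algebra $R \otimes_k A^{ab}$, equipped with the obvious structure map, satisfies the universal property characterizing $(R\otimes_k A)^{ab}$ as stated in Proposition \ref{abefun}(1). Concretely: for every $B\in \C_R \subset \C_k$, I want a natural bijection $\C_R(R\otimes_k A^{ab}, B) \xrightarrow{\cong} \nn_R(R\otimes_k A, B)$ of the form $\rho \mapsto \rho\cdot \mathfrak{ab}_{R\otimes_k A}$, which by Yoneda forces $R\otimes_k A^{ab} \cong (R\otimes_k A)^{ab}$ canonically.

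First I would unwind both sides via the adjunctions already at hand. On the one hand, Proposition \ref{abefun}(1) applied over the base ring $R$ gives $\C_R((R\otimes_k A)^{ab}, B) \cong \nn_R(R\otimes_k A, B)$. On the other hand, $R\otimes_k(-)$ is left adjoint to the forgetful functor from $R$-algebras to $k$-algebras (the extension-of-scalars adjunction), so $\nn_R(R\otimes_k A, B) \cong \nn_k(A, B)$ where on the right $B$ is viewed as a $k$-algebra. Now apply Proposition \ref{abefun}(1) over the base ring $k$ to get $\nn_k(A,B) \cong \C_k(A^{ab}, B)$, and finally use the same extension-of-scalars adjunction (this time for commutative algebras, noting $B$ is commutative) to obtain $\C_k(A^{ab}, B) \cong \C_R(R\otimes_k A^{ab}, B)$. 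Composing these four natural isomorphisms yields a natural isomorphism $\C_R(R\otimes_k A^{ab}, B) \cong \C_R((R\otimes_k A)^{ab}, B)$ of functors in $B\in\C_R$, and Yoneda's lemma gives the desired isomorphism of $R$-algebras.

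Alternatively — and this is the computation I would actually write down to make the isomorphism explicit rather than merely abstract — one can argue at the level of ideals. The map $\mathfrak{ab}_A \colon A \to A^{ab}$ is surjective, so by right-exactness of $R\otimes_k(-)$ the induced map $\mathrm{id}_R\otimes \mathfrak{ab}_A \colon R\otimes_k A \to R\otimes_k A^{ab}$ is a surjection of $R$-algebras with target commutative; hence it factors through $(R\otimes_k A)^{ab}$, giving a surjection $(R\otimes_k A)^{ab} \twoheadrightarrow R\otimes_k A^{ab}$. For the reverse, note that $R\otimes_k A^{ab}$ is the quotient of $R\otimes_k A$ by the ideal $R\otimes_k [A]$, and one checks this ideal is contained in $[R\otimes_k A]$: indeed $1\otimes[a,b] = [1\otimes a, 1\otimes b]$ lies in $[R\otimes_k A]$, and $[R\otimes_k A]$ is an $(R\otimes_k A)$-submodule, so it absorbs the $R$-linear and $A$-multiplicative span of such elements, which is exactly $R\otimes_k[A]$. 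This containment produces a surjection in the other direction, and the two maps are mutually inverse because each is the identity on the image of $R\otimes_k A$, which generates both quotients.

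The main obstacle — really the only subtle point — is the containment $R\otimes_k[A] \subseteq [R\otimes_k A]$ together with the verification that $R\otimes_k[A]$ is already a \emph{two-sided} ideal of $R\otimes_k A$ whose quotient is commutative, so that no further commutators need to be killed; this is where one must be slightly careful that elements of the form $r\otimes a$ with $r$ \emph{non-unit} in $R$ are handled, but since $[R\otimes_k A]$ is closed under left and right multiplication by all of $R\otimes_k A$ (in particular by $R\otimes 1$), writing a general element of $R\otimes_k[A]$ as a finite sum $\sum_i (r_i\otimes 1)\cdot(1\otimes c_i)$ with $c_i\in[A]$ settles it. The abstract Yoneda argument of the first two paragraphs sidesteps this bookkeeping entirely, so I would present that as the proof and relegate the explicit description to a remark.
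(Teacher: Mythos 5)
Your proposal is correct and takes essentially the same route as the paper: the paper's proof constructs $\alpha\colon (R\otimes_k A)^{ab}\to R\otimes_k A^{ab}$ and $\tilde{\beta}\colon R\otimes_k A^{ab}\to (R\otimes_k A)^{ab}$ from exactly the two universal properties you invoke (the abelianization adjunction of Proposition \ref{abefun}(1) and extension of scalars) and then checks they are mutually inverse, which your Yoneda packaging of the same four natural bijections merely automates. Your ideal-theoretic remark (that the image of $R\otimes_k[A]$ equals $[R\otimes_k A]$) is also sound and is in effect the ``easily checked'' verification the paper leaves implicit.
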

\begin{proof}
The homomorphism $id_R\otimes \mathfrak{ab}_A:R\otimes_k A \to R\otimes_k A^{ab}$ of $R-$algebras induces a unique one $\alpha:(R\otimes_kA)^{ab}\to R\otimes_k A^{ab}$ making the following diagram commutative
  \[
\xymatrix{
  R\otimes_k A \ar[dr]_{\mathfrak{ab}_{R\otimes A}} \ar[r]^{id_R\otimes\mathfrak{ab}_A}
                & R\otimes_k A^{ab}  \\
                & (R\otimes_k A)^{ab} \ar[u]^{\alpha}             }
\]
On the other hand the homomorphism of $k-$algebras given by the composition \[A\to R\otimes_k A\xrightarrow{\mathfrak{ab}_{R\otimes A}}(R\otimes_k A)^{ab}\] induces a unique one $\beta:A^{ab}\to (R\otimes_k A)^{ab}$. This $\beta$ extends to a homomorphism of $R-$algebras $\tilde{\beta}:R\otimes_k A^{ab}\to  (R\otimes_k A)^{ab}$ that is the inverse of $\alpha$ as can be easily checked.
\end{proof}

\begin{corollary}\label{bcab} With the notation of Proposition \ref{abcom} we have
\[\Gamma_R^n(R\otimes_k A)^{ab}\cong R\otimes_k\Gamma_k^n(A)^{ab}\]
\end{corollary}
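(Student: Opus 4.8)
The plan is to obtain the isomorphism by concatenating the base-change isomorphism for divided powers from \S\ref{bc} with Proposition \ref{abcom}. First I would start from the isomorphism of $R$-algebras (\ref{bcn}),
\[
R\otimes_k\Gamma_k^n(A)\xrightarrow{\;\cong\;}\Gamma_R^n(R\otimes_k A),
\]
and apply the abelianization functor. Since abelianization is a functor $\nn_k\to\C_k$ (Proposition \ref{abefun}) and the map above is in particular an isomorphism of $k$-algebras, it induces an isomorphism $(R\otimes_k\Gamma_k^n(A))^{ab}\xrightarrow{\cong}\Gamma_R^n(R\otimes_k A)^{ab}$. Here one should observe that for an $R$-algebra $C$ the commutator ideal $[C]$ is the same whether $C$ is regarded as an $R$-algebra or merely as a $k$-algebra, so $C^{ab}$ is unambiguous and abelianization is compatible with restriction of scalars; in particular the induced isomorphism is $R$-linear.

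Next I would apply Proposition \ref{abcom} to the $k$-algebra $\Gamma_k^n(A)$ in place of $A$, which gives
\[
(R\otimes_k\Gamma_k^n(A))^{ab}\cong R\otimes_k\Gamma_k^n(A)^{ab}.
\]
Composing the two isomorphisms yields $\Gamma_R^n(R\otimes_k A)^{ab}\cong R\otimes_k\Gamma_k^n(A)^{ab}$, which is the claim.

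There is no serious obstacle: the corollary is a formal consequence of results already established. The only point needing a moment's care is checking that the two inputs are genuinely compatible, namely that the base-change isomorphism (\ref{bcn}) is $R$-linear (it is, being built from the universal multiplicative polynomial law $\gamma^n$ together with the identity on $R$) and that the abelianization functor commutes with the forgetful functor from $R$-algebras to $k$-algebras. Once these bookkeeping points are in place the proof is complete by concatenation.
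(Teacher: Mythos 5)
Your proof is correct and follows exactly the route the paper intends: the paper's own proof is the one-line observation that the claim follows from the base-change isomorphism (\ref{bcn}) together with Proposition \ref{abcom}, which is precisely your concatenation of applying abelianization to (\ref{bcn}) and then invoking Proposition \ref{abcom} for $\Gamma_k^n(A)$. Your extra remarks on $R$-linearity and compatibility of abelianization with restriction of scalars are sound bookkeeping that the paper leaves implicit.
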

\begin{proof}
It follows from (\ref{bcn}) and Proposition~\ref{abcom}.
\end{proof}
\begin{remark}
Corollary \ref{bcab} remain true by replacing $\Gamma^n$ with $\TS^n$ when $A$ is flat.
In particular when $R$ is a field.
\end{remark}
\begin{definition}\label{san}
We denote by $\san$ the affine $R-$scheme Spec $\Gamma_R^n(A)^{ab}$.
\end{definition}
\begin{proposition}\label{sanbed}
Suppose $A=R \otimes F/J \in \nn_r$ then the induced morphism
$\san\to \mathcal{S}_{R\otimes F}^n \cong
\mathcal{S}_F^n\times_k\mathrm{Spec}\,R$ is a closed immersion.
\end{proposition}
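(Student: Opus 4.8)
The plan is to exhibit the morphism $\san\to\mathcal S_F^n\times_k\mathrm{Spec}\,R$ at the level of rings as a surjection, which is exactly what a closed immersion of affine schemes requires. First I would pin down the map of rings. By base change (equation \eqref{bcn}) we have $\Gamma_R^n(R\otimes_k F)\cong R\otimes_k\Gamma_k^n(F)$, and by Corollary~\ref{bcab} the same holds after abelianization: $\Gamma_R^n(R\otimes_k F)^{ab}\cong R\otimes_k\Gamma_k^n(F)^{ab}$. So $\mathcal S_{R\otimes F}^n\cong\mathcal S_F^n\times_k\mathrm{Spec}\,R$ as claimed, and the morphism in question is induced by the surjection of $R$-algebras
\[
R\otimes_k\Gamma_k^n(F)^{ab}\;\twoheadrightarrow\;\Gamma_R^n(A)^{ab},
\]
obtained by applying the functor $\Gamma_R^n(-)^{ab}$ to the quotient map $R\otimes_k F=R\otimes_k F\to A=R\otimes_k F/J$.

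Next I would verify that this ring map is indeed surjective. This is where the functorial preservation of surjections does all the work: $\Gamma_R^n$ preserves surjections (stated in \S\ref{upgamma}, and following from Lemma~\ref{basis} applied to a generating set), and the abelianization functor preserves surjections (Proposition~\ref{abefun}(2)). Composing, $\Gamma_R^n(-)^{ab}$ preserves surjections. Since $R\otimes_k F\to A$ is surjective by hypothesis ($A=R\otimes F/J$), the induced map $\Gamma_R^n(R\otimes_k F)^{ab}\to\Gamma_R^n(A)^{ab}$ is surjective, and after the base-change identification this is precisely the comorphism of $\san\to\mathcal S_F^n\times_k\mathrm{Spec}\,R$. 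A surjection of commutative rings induces a closed immersion on $\mathrm{Spec}$, so we are done.

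There is really no hard step here; the statement is a formal consequence of (i) base change for divided powers and their abelianization, and (ii) right-exactness of the relevant functors. The only point deserving a word of care is making sure the two identifications fit together compatibly: the isomorphism $\mathcal S_{R\otimes F}^n\cong\mathcal S_F^n\times_k\mathrm{Spec}\,R$ must be the one coming from \eqref{bcn} and Corollary~\ref{bcab}, and one should check that the natural transformation (applying $\Gamma_R^n(-)^{ab}$ to $R\otimes_k F\to A$) is intertwined with these isomorphisms — but this is immediate from the naturality of the base-change isomorphism in Roby's theorem (\S\ref{bc}) and of the abelianization comparison map in Proposition~\ref{abcom}. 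I would therefore present the proof in three lines: identify the target via Corollary~\ref{bcab}, note $\Gamma_R^n(-)^{ab}$ is right exact, conclude that a surjection of rings yields a closed immersion.
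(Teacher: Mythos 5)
Your proposal is correct and follows essentially the same route as the paper's own (very terse) proof: identify $\mathcal{S}_{R\otimes F}^n\cong\mathcal{S}_F^n\times_k\mathrm{Spec}\,R$ via the base-change isomorphism for $\Gamma^n$ and its abelianization, then use that both $\Gamma_R^n$ and abelianization preserve surjections so the comorphism is surjective and the morphism is a closed immersion. The only cosmetic point is that calling $\Gamma_R^n(-)^{ab}$ ``right exact'' is loose language for a non-additive functor; preservation of surjections, which you also state, is the precise property needed.
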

\begin{proof}
It follows from Proposition \ref{abefun} and \ref{bc}.
\end{proof}
\begin{remark}
In view of Definition \ref{san} all the results of this section can be rephrased in terms of $R-$schemes.
In particular Corollary \ref{bcab} means that $\san$ base-changes well.
\end{remark}
\begin{remark}\label{xn}
When $A$ is flat (see paragraph \ref{flat}) and commutative one sees that
\[\san\cong X^n/S_n=\,\mathrm{Spec}\,\TS^n_R(A)\]
the $n-$fold symmetric product of $X=$Spec $A$.

In particular it follows that $\mathcal{S}_F^n\cong
\mathrm{Spec}\,\TS_k^n(F)^{ab}$.
\end{remark}

\subsection{Generators II}\label{gen2}
We extend here the results of Section \ref{gen} to the abelianization $\Gamma(A)^{ab}$.
\begin{definition}
Consider $\m^+/\sim$ the set of the equivalence classes of monomials $\mu\in\m^+$ where $\mu\sim\mu'$ if and
only if there is a cyclic permutation $\sigma$ such that $\sigma(\mu)=\mu'$. We set $\Psi$ to denote the set of
equivalence classes in $\m^+/\sim$ made of  primitive monomials
\end{definition}

\begin{theorem}\label{genab}
The ring $\Gamma^n_k(F)^{ab}$ is generated as a ring by $\mathfrak{ab}(\pd 1 {n-i}\times \pd {\mu} i)$ where $1\leq i \leq n$ and $\mu$ varying in a complete set of representatives of $\Psi$.
\end{theorem}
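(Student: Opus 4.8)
The plan is to combine Theorem~\ref{prigen} with the structure of the abelianization functor, so that the only new work is to track what happens to the generators $\pd 1 {n-i}\times\pd{\mu}i$ under $\mathfrak{ab}$. First I would invoke Theorem~\ref{prigen}: the $k$-algebra $\Gamma_k^n(F)$ is generated by the elements $\pd 1 {n-i}\times\pd{\mu}i$ with $1\le i\le n$ and $\mu\in\m^+$ primitive. Since $\mathfrak{ab}:\Gamma_k^n(F)\to\Gamma_k^n(F)^{ab}$ is a surjective ring homomorphism (Proposition~\ref{abefun}), the images $\mathfrak{ab}(\pd 1 {n-i}\times\pd{\mu}i)$, as $i$ ranges over $1,\dots,n$ and $\mu$ over all primitive monomials, generate $\Gamma_k^n(F)^{ab}$ as a ring. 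So it remains to reduce the index set of $\mu$ from all primitive monomials to a complete set of representatives of $\Psi$, i.e.\ to show that cyclically equivalent primitive monomials give the same generator after abelianization.

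The key step, therefore, is the claim: if $\mu'=\sigma(\mu)$ for a cyclic permutation $\sigma$, then $\pd 1 {n-i}\times\pd{\mu}i$ and $\pd 1 {n-i}\times\pd{\mu'}i$ have the same image in $\Gamma_k^n(F)^{ab}$. It suffices to treat the elementary case $\mu=uv$, $\mu'=vu$ with $u,v$ monomials (every cyclic permutation is a composition of such moves). Now $\mu-\mu'=uv-vu=[u,v]\in[F]$, so $\mu$ and $\mu'$ become equal in $F^{ab}$. I would then argue that the universal degree-$n$ polynomial law $\gamma^n:F\to\Gamma_k^n(F)$ descends compatibly: writing $g:F\to F^{ab}$ for the abelianization of $F$, functoriality of $\Gamma^n$ gives a commuting square relating $\gamma^n_F$ and $\gamma^n_{F^{ab}}$, and one checks that $\mathfrak{ab}_{\Gamma_k^n(F)}=\Gamma_k^n(g)$ up to the canonical identification (both are the unique ring map induced by the multiplicative law $g$ composed with the universal law, using the isomorphism (\ref{mp}) and the fact that $\Gamma_k^n(F^{ab})$ is already commutative). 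Because $\Gamma_k^n(g)$ sends $\pd a i\mapsto\pd{g(a)}i$ and $g(\mu)=g(\mu')$ in $F^{ab}$, it follows that $\Gamma_k^n(g)(\pd 1 {n-i}\times\pd{\mu}i)=\Gamma_k^n(g)(\pd 1 {n-i}\times\pd{\mu'}i)$, hence the two generators coincide in $\Gamma_k^n(F)^{ab}$.

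An alternative, more hands-on route to the same claim avoids the identification $\mathfrak{ab}_{\Gamma}=\Gamma(g)$: work directly in $\Gamma_k^n(F)/[\Gamma_k^n(F)]$ and use relation (4) of Definition~\ref{divdef} applied to $\mu'=\mu+(\mu'-\mu)$ together with $\pd{[u,v]}j\in[\Gamma_k^n(F)]$ for $j\ge1$. Expanding $\pd{(\mu+c)}i=\sum_{p+q=i}\pd{\mu}p\times\pd c q$ with $c=\mu'-\mu$ a commutator, every term with $q\ge1$ lies in the commutator ideal, so modulo $[\Gamma_k^n(F)]$ one gets $\pd{\mu'}i\equiv\pd{\mu}i$; multiplying by $\pd 1 {n-i}$ and projecting gives the result. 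I expect the main obstacle to be precisely this bookkeeping — making rigorous that a commutator in $F$ produces, via the divided-power relations, an element of the commutator ideal $[\Gamma_k^n(F)]$ in the right graded piece — since one must be careful that $\pd c q$ for a commutator $c$ need not itself be a commutator, but rather lies in the two-sided ideal generated by commutators; the cleanest fix is the functorial argument of the previous paragraph, which sidesteps the issue by passing through $F^{ab}$ where the relevant identity is trivial. Once the claim is established, the theorem follows immediately: choosing one representative $\mu$ per class in $\Psi$ already yields all needed generators, so $\Gamma_k^n(F)^{ab}$ is generated by the $\mathfrak{ab}(\pd 1 {n-i}\times\pd{\mu}i)$ with $1\le i\le n$ and $\mu$ running over a complete set of representatives of $\Psi$.
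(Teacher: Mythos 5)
Your outer reduction coincides with the paper's: Theorem \ref{prigen} plus surjectivity of $\mathfrak{ab}$ reduce everything to the claim that $\mathfrak{ab}(\pd 1 {n-i}\times\pd {(uv)} i)=\mathfrak{ab}(\pd 1 {n-i}\times\pd {(vu)} i)$. But both of your arguments for that claim fail, and the failure is exactly the central subtlety of this construction. The functorial route rests on the identification $\mathfrak{ab}_{\Gamma_k^n(F)}=\Gamma_k^n(g)$, i.e.\ on $\Gamma_k^n(F)^{ab}\cong\Gamma_k^n(F^{ab})$, and this is false: a multiplicative polynomial law from $F$ into a \emph{commutative} ring need not factor through $F^{ab}$ (the determinant of an $n$-dimensional representation is the basic example; note $M_n(k)^{ab}=0$ for $n\geq 2$), so the law $\mathfrak{ab}\circ\gamma^n$ is not ``induced by $g$'' and the canonical surjection $\Gamma_k^n(F)^{ab}\to\Gamma_k^n(F^{ab})$ is far from injective. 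Indeed, if your identification held, your argument would show that $\mathfrak{ab}(\pd 1 {n-i}\times\pd {\mu} i)$ depends only on the image of $\mu$ in $F^{ab}$, i.e.\ only on its multidegree rather than on its cyclic class; for $k$ infinite, via $\Gamma_k^n(F)^{ab}\cong C_n(F)=V_n(F)^{\GL_n(k)}$ (Remark \ref{isocla}, \cite{V1}) this would force false identities such as $\mathrm{tr}(\xi_1\xi_2\xi_1\xi_2)=\mathrm{tr}(\xi_1^2\xi_2^2)$ for generic matrices, collapsing $\mathcal{S}_F^n$ onto the multisymmetric functions $\mathrm{Spec}\,\TS_k^n(k[x_1,\dots,x_m])$, which contradicts the main theorem of \cite{V1}.

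Your ``hands-on'' fallback has the same defect, which you half-anticipated but did not repair: it is simply not true that each term $\pd 1 {n-i}\times\pd {\mu} p\times\pd c q$ with $c=[u,v]$ and $q\geq 1$ lies in $[\Gamma_k^n(F)]$. Concretely, for $n=i=2$, $u=x_1$, $v=x_2$, one has $\pd{[u,v]}2=\pd{(uv)}2-\pd{(uv)}1\times\pd{(vu)}1+\pd{(vu)}2$, and a short computation with the Product Formula shows its image in $V_2(F)^{\GL_2(k)}$ is $2\det(\xi_1\xi_2)-\mathrm{tr}(\xi_1\xi_2)^2+\mathrm{tr}(\xi_1^2\xi_2^2)=\mathrm{tr}(\xi_1^2\xi_2^2)-\mathrm{tr}(\xi_1\xi_2\xi_1\xi_2)\neq 0$, so $\pd{[u,v]}2\notin[\Gamma_k^2(F)]$; only the \emph{sum} of the $q\geq 1$ terms dies after $\mathfrak{ab}$, not each term separately. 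The mechanism that actually yields cyclic invariance --- and nothing stronger --- is the multiplicative ($\star$) structure, as in the paper: expand $\pd {(1+t_1\otimes r)} n \star \pd {(1+t_2\otimes s)} n=\pd {(1+t_1\otimes r+t_2\otimes s+t_1t_2\otimes rs)} n$ (equation (\ref{aperb})) in the two orders, use that $\star$ becomes commutative after $\mathfrak{ab}$, and compare the coefficients of $t_1^it_2^i$, with an induction on $i$ to absorb the mixed terms $\pd r b\times\pd s b\times\pd{(rs)}{i-b}$ with $b\geq 1$; this gives precisely $\mathfrak{ab}(\pd 1 {n-i}\times\pd {(rs)} i)=\mathfrak{ab}(\pd 1 {n-i}\times\pd {(sr)} i)$ for $r,s\in F^+$. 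Until the key claim is proved along these lines, your proposal has a genuine gap.
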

\begin{proof}
Using (\ref{aperb}) it easy to see that
\begin{equation}\mathfrak{ab}(\pd 1 {n-i}\times\pd {(rs)} i)=\mathfrak{ab}(\pd 1 {n-i}\times\pd {(sr)} i)\end{equation}
for all $1\leq i \leq n$ and $r,s\in F^+$. The result then follows from Theorem \ref{prigen} and the surjectivity
of $\mathfrak{ab}$.
\end{proof}

\begin{corollary}\label{sip}
Let $A$ be a $k-$algebra generated by $\{a_i\}_{i\in I}$ then
\begin{enumerate}
\item $\Gamma_{k}^n(A)$ is generated by $\pd 1 {n-j}\pd {\mu} j$ where
$1\leq j\leq n$ and $\mu$ varies in the set of primitive monomials
in $\{a_i\}_{i\in I}$
\item $(\Gamma_{k}^n(A))^{ab}$ is
generated by $\mathfrak{ab}(\pd 1 {n-j}\pd {\ups} j)$ where $\ups$ varies in a
complete set of representative of equivalence classes (under cyclic
permutations) of primitive monomials in the $\{a_i\}_{i\in I}$.
\end{enumerate}
\end{corollary}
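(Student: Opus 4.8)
The plan is to obtain both statements by pulling the generating sets of Theorems~\ref{prigen} and~\ref{genab} back along a presentation of $A$, so that the corollary becomes a formal consequence of the free case. First I would reduce to a finite generating set: writing $A=\varinjlim_{S}A_S$ as the filtered colimit of the subalgebras $A_S=k\langle a_i:i\in S\rangle$ over the finite subsets $S\subseteq I$, and using that $\Gamma_k^n$ commutes with filtered colimits (paragraph~\ref{flat}) and that the abelianization, being left adjoint to the inclusion $\C_k\hookrightarrow\nn_k$ (Proposition~\ref{abefun}(1)), also preserves them, one gets $\Gamma_k^n(A)=\varinjlim_S\Gamma_k^n(A_S)$ and $\Gamma_k^n(A)^{ab}=\varinjlim_S\Gamma_k^n(A_S)^{ab}$; since the candidate generating sets for the $A_S$ are literally nested (a primitive monomial in $\{a_i:i\in S\}$ is one in $\{a_i:i\in S'\}$ for $S\subseteq S'$), their union generates the colimit. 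Hence I may assume $A=F/J$ with $F=k\{x_1,\dots,x_m\}$; write $\phi\colon F\to A$ for the quotient map and $a_i=\phi(x_i)$.

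For part~(1): since $\Gamma_k^n$ preserves surjections (paragraph~\ref{upgamma}), the induced $k$-algebra homomorphism $\Gamma_k^n(\phi)\colon\Gamma_k^n(F)\to\Gamma_k^n(A)$ is onto, and it sends $\pd 1 {n-j}\times\pd\mu j$ to $\pd 1 {n-j}\times\pd{\phi(\mu)} j$, where $\phi(\mu)$ is, letter for letter, the same word in the $a_i$ that $\mu$ is in the $x_i$. The point I want to stress is that being a primitive monomial is a combinatorial property of the underlying sequence of indices, hence is unaffected both by the renaming $x_i\mapsto a_i$ and by any coincidences among the resulting elements of $A$. Therefore the image under $\Gamma_k^n(\phi)$ of the generating set of $\Gamma_k^n(F)$ supplied by Theorem~\ref{prigen} is exactly $\{\pd 1 {n-j}\times\pd\mu j:1\le j\le n,\ \mu\text{ a primitive monomial in the }a_i\}$, which therefore generates $\Gamma_k^n(A)$.

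For part~(2): the abelianization preserves surjections (Proposition~\ref{abefun}(2)) and is natural, so $\mathfrak{ab}\circ\Gamma_k^n(\phi)=\Gamma_k^n(\phi)^{ab}\circ\mathfrak{ab}$ with $\Gamma_k^n(\phi)^{ab}\colon\Gamma_k^n(F)^{ab}\twoheadrightarrow\Gamma_k^n(A)^{ab}$. Pushing the generating set of $\Gamma_k^n(F)^{ab}$ from Theorem~\ref{genab} through this map yields the elements $\mathfrak{ab}(\pd 1 {n-j}\times\pd{\phi(\ups)} j)$ with $\ups$ running over a complete set of representatives of $\Psi$; since $\phi$ merely relabels the variables, it induces a bijection from cyclic equivalence classes of primitive monomials in the $x_i$ onto those in the $a_i$, so the $\phi(\ups)$ form a complete set of representatives of the cyclic classes of primitive monomials in the $a_i$, and part~(2) follows. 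As a remark, one can bypass the free algebra altogether and re-run the arguments of Sections~\ref{gen}--\ref{gen2} over $A$ directly: the Product Formula, Lemma~\ref{ple} (via the evaluation $k[x]\to A$), and the identity $\mathfrak{ab}(\pd 1 {n-i}\times\pd{(rs)} i)=\mathfrak{ab}(\pd 1 {n-i}\times\pd{(sr)} i)$ — which rests only on~(\ref{aperb}) — all hold verbatim with arbitrary elements of $A$ in place of elements of $F^+$.

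I do not expect a genuine obstacle: the argument is formal, built on the functoriality of $\Gamma_k^n$ and of the abelianization together with their preservation of surjections, plus the commutation of $\Gamma_k^n$ with filtered colimits for the case of infinitely many generators. The one thing that requires care — and which I would make explicit — is the bookkeeping noted above: ``monomial in the $a_i$'', ``primitive'' and ``cyclic equivalence'' must all be read at the level of index words, so that they transport rigidly along $\phi$ even though distinct words may represent the same element of $A$.
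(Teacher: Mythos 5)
Your proof is correct and follows essentially the same route as the paper: push the generating sets of Theorems~\ref{prigen} and~\ref{genab} through the surjection $\Gamma_k^n(F)\to\Gamma_k^n(A)$ and its abelianization, using that $\Gamma_k^n$ and the abelianization functor preserve surjections. The only difference is the bookkeeping for infinitely many generators: the paper applies the free-algebra theorems directly to $F_I=k\{x_i\}_{i\in I}$ (implicitly allowing infinitely many letters), whereas you first reduce to finite generating sets via filtered colimits, which keeps you within the literal finite-letter statements of Theorems~\ref{prigen} and~\ref{genab}; both handlings are fine.
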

\begin{proof}
Given the $k-$algebra $A$ and a set $\{a_i\}_{i\in I}$ of its
generators we have a surjective homomorphism
$F_I=\K\{x_i\}_{i\in I} \to A$ hence another surjective one
$\Gamma_k^n(F_I)\to\Gamma_{k}^n(A)$
and the result follows from Theorem \ref{prigen}.
The result on the abelianization follows from Theorem \ref{genab} since
the abelianization functor preserves surjections.
\end{proof}
\begin{remark}
Corollary \ref{sip}(1) is a refinement of Cor.(4.5) in
\cite{zipgen}.
\end{remark}
\begin{corollary}\label{rh}

\begin{enumerate} The following hold
\item
The ring
$\TS_k^n(\K[x_1,\dots,x_m])\cong\K[x_{11},x_{12},\dots,x_{m1},\dots,x_{mn}]^{S_n}$
of the multisymmetric functions also known as the ring of the vector
invariants of the symmetric group is generated by the $\pd 1 {n-i}\times \pd {\ups} i$
with $\ups=x_1^{\al_1}\cdots x_{m}^{\al_m}$ such that
$\al_1,\dots,\al_m$ are coprime.
\item Let $A$ be a commutative $k-$algebra generated by
$\{a_i\}_{i\in I}$ then $\Gamma_{k}^n(A)$ is generated by $\pd 1
{n-j}\pd {\ups} j$ where $\ups=\prod_i a_i^{\al_i}$ is such that
$\sum_i\al_i$ is finite and the $\al_i$ are coprime.
\end{enumerate}
\end{corollary}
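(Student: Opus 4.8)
The plan is to derive both statements from the noncommutative generators theorem (Theorem~\ref{prigen}) and its abelianized version (Theorem~\ref{genab}) by passing to the commutative quotient. First I would observe that for a commutative $k$-algebra $A$ generated by $\{a_i\}_{i\in I}$, the surjection $F_I = k\{x_i\}_{i\in I}\to A$ factors through the free \emph{commutative} algebra $k[x_i]_{i\in I}$, since $A$ is commutative; equivalently $A$ is a quotient of $k[x_i]_{i\in I} = F_I^{ab}$. Applying the functor $\Gamma_k^n$ and using that it preserves surjections (paragraph~\ref{upgamma}), we get a surjection $\Gamma_k^n(k[x_i]_{i\in I})\to \Gamma_k^n(A)$. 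So it suffices to prove the claim for $A = k[x_1,\dots,x_m]$, i.e.\ part~(1), and then transport it along this surjection, exactly as in the proof of Corollary~\ref{sip}.

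For part~(1), the key point is the classical isomorphism $\Gamma_k^n(k[x_1,\dots,x_m])\cong \TS_k^n(k[x_1,\dots,x_m])\cong k[x_{11},\dots,x_{mn}]^{S_n}$: the polynomial ring $k[x_1,\dots,x_m]$ is free as a $k$-module, so $\tau_n$ is an isomorphism by paragraph~\ref{flat}, and the $n$-fold tensor power computes the ring of multisymmetric functions (vector invariants of $S_n$). Under this identification I would track the generators produced by Theorem~\ref{genab}. That theorem gives generators $\mathfrak{ab}(\pd{1}{n-i}\times\pd{\mu}{i})$ with $\mu$ ranging over representatives of cyclic-equivalence classes of primitive monomials in $F = k\{x_1,\dots,x_m\}$. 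But a monomial $\mu\in\m^+$ becomes, in the commutative quotient $k[x_1,\dots,x_m]$, a commutative monomial $x_1^{\al_1}\cdots x_m^{\al_m}$; two cyclically equivalent noncommutative monomials give the same commutative monomial, and a noncommutative monomial is primitive exactly when the corresponding commutative exponent vector $(\al_1,\dots,\al_m)$ is not a proper multiple of another nonnegative integer vector, i.e.\ when $\gcd(\al_1,\dots,\al_m)=1$. Hence the image of the generating set of Theorem~\ref{genab} in $\Gamma_k^n(k[x_1,\dots,x_m])$ is exactly $\{\pd{1}{n-i}\times\pd{\ups}{i} : 1\le i\le n,\ \ups = x_1^{\al_1}\cdots x_m^{\al_m},\ \gcd(\al_1,\dots,\al_m)=1\}$, which proves~(1).

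For part~(2), I would run the reduction from the first paragraph: given commutative $A$ generated by $\{a_i\}_{i\in I}$, choose the surjection $k[x_i]_{i\in I}\to A$ sending $x_i\mapsto a_i$, apply $\Gamma_k^n$ to get a surjection onto $\Gamma_k^n(A)$, and push forward the generators from~(1) (stated for finitely many variables, but the argument is identical for an arbitrary index set $I$, or one reduces to finite subsets as in Lemma~\ref{basis}). The image of $\pd{1}{n-j}\times\pd{x_1^{\al_1}\cdots}{j}$ is $\pd{1}{n-j}\times\pd{\prod_i a_i^{\al_i}}{j}$, and since the generators of~(1) have coprime exponent vectors, so do these; this is precisely the asserted generating set.

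The main obstacle I anticipate is the bookkeeping in part~(1): making precise that ``primitive up to cyclic permutation'' in the noncommutative monoid $\m^+$ corresponds, after abelianization, exactly to ``coprime exponent vector.'' The forward direction (a nonprimitive noncommutative monomial, say $\nu^d$, maps to a commutative monomial with exponent vector divisible by $d$) is immediate, but the converse — that every coprime commutative monomial $x_1^{\al_1}\cdots x_m^{\al_m}$ is the image of \emph{some} primitive noncommutative monomial — requires exhibiting, for each coprime tuple $(\al_1,\dots,\al_m)$, a primitive word with that content; a clean choice such as $x_1^{\al_1}x_2^{\al_2}\cdots x_m^{\al_m}$ works once one checks it cannot be a proper power (any proper power $\nu^d$ would force $d\mid \al_i$ for all $i$). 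Everything else is a routine application of functoriality, preservation of surjections, and the freeness-based identification $\Gamma^n\cong\TS^n$.
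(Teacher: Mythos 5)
Your overall strategy (reduce part (2) to part (1) via preservation of surjections, and get part (1) by pushing the generators of Theorem~\ref{genab} through $\Gamma_k^n(F)\to\Gamma_k^n(k[x_1,\dots,x_m])\cong\TS_k^n(k[x_1,\dots,x_m])$) is the same as the paper's, but the combinatorial step you single out as the ``main obstacle'' is resolved incorrectly, and this is a genuine gap. It is \emph{not} true that a noncommutative monomial is primitive exactly when its exponent vector is coprime: the word $x_1x_1x_2x_2$ is primitive in $\m^+$ (it is not a proper power of any word, and neither is any of its cyclic shifts), yet it abelianizes to $x_1^2x_2^2$, whose exponent vector $(2,2)$ has $\gcd 2$. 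The two implications you actually check (nonprimitive $\Rightarrow$ divisible exponents, and every coprime commutative monomial lifts to a primitive word) are both true but do not give the equivalence you use. Consequently the image of the generating set of Theorem~\ref{genab} is strictly larger than $\{\pd 1 {n-i}\times\pd \ups i : \gcd(\al_1,\dots,\al_m)=1\}$; your argument therefore proves generation by this larger set, not by the coprime set claimed in the corollary.

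To close the gap you need one more reduction inside the commutative algebra, namely the analogue of how Theorem~\ref{prigen} refines Corollary~\ref{gen}: if $\ups=\nu^d$ with $d>1$ (which can happen commutatively even when every noncommutative lift is primitive, as in $x_1^2x_2^2=(x_1x_2)^2$), apply Lemma~\ref{ple} --- whose proof, via the evaluation $k[x]\to A$, $x\mapsto\nu$, and the plethysm $e_h\cdot p_d$, works verbatim for any element of any $k$-algebra, in particular for $\nu\in k[x_1,\dots,x_m]$ --- to express $\pd 1 {n-h}\times\pd{(\nu^d)}{h}$ as a polynomial in the $\pd 1 {n-j}\times\pd{\nu}{j}$; iterating reduces every monomial to ones with coprime exponent vector. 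With that step inserted (and the same remark for part (2), where $\ups=\prod_i a_i^{\al_i}$ with non-coprime $\al_i$ is again a proper power), your proof matches the paper's, which likewise rests on Theorem~\ref{genab} plus preservation of surjections, with the power-sum reduction supplying the coprimality.
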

\begin{proof}
It follows from Theorem \ref{genab} since $\Gamma^n$ preserves
surjections. Write $P=K[x_1,\dots,x_m]$, then $P$ is a free
$k-$module hence the surjection $F\to P$ induces
another surjection $TS_k^n(F)^{ab}\to TS_k^n(P)$. Use
Theorem \ref{genab}.
\end{proof}
\begin{remark}
This gives another proof of Th.1 in \cite{V4}
\end{remark}
\begin{remark} A {\em minimal\/} generating set for $\Gamma_k^n(k[x_1,\dots,x_m])$ has been found by David Rydh \cite{rd} improving Corollary \ref{rh}.
\end{remark}

\section{The norm map}\label{nmap} In this section we introduce
a morphism which connect $\ran//\GL_n$ and $\san.$

Let $\rho\in\nn_R(A,B)$ be a representation of $A$ over
$B\in\C_R$. The composition $\det\cdot\rho$ is a multiplicative
polynomial law homogeneous of degree $n$ belonging to
$MP_R^n(A,B)$. We denote by $\det_{\rho}$ the unique homomorphism
in $\C_R(\Gamma_R^n(A)^{ab},B)$ such that
$\det\cdot\rho=\det_{\rho}\cdot\gamma^n$, see (\ref{mp}). The
correspondence $\rho\mapsto\det_{\rho}$ is clearly functorial in
$B$ giving then a morphism $\Gamma_R^n(A)^{ab}\to V_n(A)$ by
universality and henceforth a morphism of $R-$schemes
\begin{equation}\label{ddet}
\ddet:\ran\to\san
\end{equation}
Let $\bar{\rho}:V_n(A)\to B$ be the unique $B-$point of $\ran$ such that $\rho=M_n(\bar{\rho})\cdot\pi_A$, see Definition \ref{pigreco}.
  It is easy to check that
 \[\bar{\rho}\cdot\det\cdot\pi_A=\det\cdot M_n(\bar{\rho})\cdot\pi_A=\det\cdot\rho\]
 so that the following commutes
 \begin{equation}\label{detro}
\xymatrix{
A \ar[r]^(.3){\pi _A} \ar@{=}[d]&
M_n(V_n(A))\ar[d]^{M_n(\bar{\rho})}\ar[r]^{\det}&V_n(A)\ar[d]^{\bar{\rho}} \\
A\ar[r]^(.3){\rho}& M_n(B)\ar[r]^{\det}&B }
\end{equation}
It follows that $\ddet$ is the affine morphism corresponding to the composition of the universal map $\pi _A$ introduced in ({\ref{pigreco}}) with the determinant i.e. to the top horizontal arrows of  diagram (\ref{detro}).

The determinant is invariant under basis changes and $\ran//\GL_n$ is a categorical quotient. Hence there exists a unique morphism $\dddet:\ran//\GL_n\to\san$ such that the following commutes
\begin{equation}\label{invcnm}
\xymatrix{\ran\ar[rd]^{\ddet}\ar[d]\\
\ran//\GL_n\ar[r]_(.6){\dddet}&\san}
\end{equation}
We have the following result.
\begin{theorem}
The morphism $\dddet : \ran//\GL_n\rightarrow\san$ has the following properties.
\begin{enumerate}
\item If $A=k\{x_1,\dots,x_m\}$ then $\dddet$ is an isomorphism
when $k=R$ is an infinite field or $k=R=\ZZ$. \item Suppose $k=R$
is an infinite field. If $A\in\C_k$ then $\dddet$ induces an
isomorphism between the associated reduced schemes.
\item Suppose
$k=R$ is a characteristic zero field then
    \subitem when $A\in\nn_k$ then $\dddet$ is a closed embedding,
    \subitem when $A\in\C_k$ then $\dddet$ is an isomorphism.
\item When $A$ is commutative and flat as $R-$module then
$\Gamma_R(A)\cong\TS_R^n(A)\to V_n(A)$ is an injective
homomorphism.
\end{enumerate}
\end{theorem}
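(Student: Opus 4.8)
The plan is to establish each of the four items in turn, relying throughout on the already-developed machinery of divided powers, base change (Section \ref{bc}, Corollary \ref{bcab}), the generator theorems of Section \ref{gen2}, and the multiplicative-law universality \eqref{mp}.

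For item (1), the key input is Theorem 1 of \cite{V1}, which gives $\mathrm{Spec}\,\Gamma_k^n(F)^{ab}\cong \ran//\GL_n$ (i.e.\ $\Gamma_k^n(F)^{ab}\cong V_n(F)^{\GL_n}$) when $k$ is $\ZZ$ or an infinite field. I would first unwind the construction of $\dddet$ from diagram \eqref{invcnm}: by the discussion preceding \eqref{detro}, $\ddet$ is the affine morphism dual to $\det\cdot\pi_A:A\to V_n(A)$, which corresponds to the algebra map $\Gamma_R^n(A)^{ab}\to V_n(A)$ sending $\mathfrak{ab}(\pd 1{n-i}\times\pd\mu i)$ to $e_i(\pi_A(\mu))$. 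Since these $e_i(\pi_A(\mu))$ are precisely the generators of $C_n(F)$, and since $C_n(F)=V_n(F)^{\GL_n}$ by Remark \ref{isocla} (in the $\ZZ$ / infinite-field case cited from \cite{Do,Pr3,Zu}), the image of $\dddet^\sharp$ is exactly $C_n(F)$; on the other hand the generator Theorem \ref{genab} shows $\Gamma_k^n(F)^{ab}$ is generated by exactly the elements $\mathfrak{ab}(\pd 1{n-i}\times\pd\mu i)$ mapping onto those generators, so $\dddet^\sharp$ is surjective onto $C_n(F)=V_n(F)^{\GL_n}$. Injectivity is then the content of \cite{V1} Theorem 1. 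I would then pass from $F$ to $A=k\{x_1,\dots,x_m\}=R\otimes_k F$ by the base-change isomorphisms \eqref{bcn}, Corollary \ref{bcab}, and the fact that $V_n$ also commutes with the base change $k\to R$ (clear from Lemma \ref{repr}, since $V_n(F)=k[\xi_{kij}]$).

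For item (2), write $A=k[x_1,\dots,x_m]/J\in\C_k$, so there is a surjection $k\{x_1,\dots,x_m\}\twoheadrightarrow A$. Using item (1) and the functoriality/surjectivity preservation of both $\Gamma^n(-)^{ab}$ (Proposition \ref{abefun}) and of the representation algebra, I would reduce to comparing $C_n(A)$ with $V_n(A)^{\GL_n}$: the point is that over an infinite field these two coincide \emph{up to nilpotents} — i.e.\ they have the same reduced spectrum — by the classical result that the ring of $\GL_n$-invariants of $m$ commuting generic matrices is generated by traces modulo nilpotents when $A$ is commutative. Concretely, $\san$ for commutative flat $A$ is $X^n/S_n$ (Remark \ref{xn}), and on reduced points over an infinite field $\dddet$ realizes the classical map from the variety of $n$-dimensional semisimple representations (= unordered $n$-tuples of points of $X$) to $X^{(n)}$, which is a bijection; combined with the fact that $\dddet^\sharp$ is surjective onto the trace subring, this forces an isomorphism of reduced schemes.

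For item (3), over a characteristic-zero field $\GL_n$ is linearly reductive, so $C_n(A)=V_n(A)^{\GL_n}$ for \emph{all} $A\in\nn_k$ (Remark \ref{isocla}). Thus $\dddet^\sharp$ is always surjective, hence $\dddet$ is a closed embedding for $A\in\nn_k$; and when moreover $A\in\C_k$ one combines this surjectivity with the characteristic-zero form of \cite{V1} (or with the classical fact that invariants of commuting matrices equal the trace ring in characteristic zero, together with the first fundamental theorem for $\mathrm{GL}_n$) to upgrade the closed embedding to an isomorphism onto $X^n/S_n$. Finally, for item (4), when $A$ is commutative and $R$-flat, $\Gamma_R^n(A)^{ab}=\Gamma_R^n(A)\cong\TS_R^n(A)$ by paragraph \ref{flat} (commutativity kills $[A]$, flatness gives $\tau_n$ an isomorphism); one then checks that $\dddet^\sharp:\TS_R^n(A)\to V_n(A)$ is injective by reducing, via flatness and passage to a faithfully flat extension, to the case where $A$ is free — equivalently $X=\mathbb A^N$ — where the map is the inclusion of multisymmetric functions into a polynomial ring, manifestly injective.

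\textbf{The main obstacle.} The hard part will be item (2): the surjectivity of $\dddet^\sharp$ onto $C_n(A)$ is formal from the generator theorems, but showing that the induced map is an isomorphism \emph{after reduction} requires the nontrivial classical fact (Procesi, Donkin, Zubkov) that over an infinite field the $\GL_n$-invariants of the commuting scheme agree with the trace algebra modulo nilpotents, together with a clean identification of the reduced points on both sides with unordered $n$-tuples from $X$. Controlling nilpotents precisely — rather than just comparing closed points — is where the delicate work lies, and is presumably why the statement is restricted to reduced schemes in this case. Items (1), (3), (4) are comparatively direct given \cite{V1}, linear reductivity, and the flat freeness reduction respectively.
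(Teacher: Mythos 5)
Your item (1) and the closed-embedding half of item (3) are essentially the paper's own argument (surjectivity onto $C_n(A)$ from the generator results, injectivity from Theorem 1 of \cite{V1}, linear reductivity in characteristic zero). The genuine gap is in items (2) and (4), which is exactly where the paper's proof has its real content: the explicit inverse built from the diagonal representation $A\to M_n(A^{\otimes n})$, $a\mapsto \mathrm{diag}(a\otimes 1^{\otimes n-1},\dots,1^{\otimes n-1}\otimes a)$. Its classifying map $V_n(A)\to A^{\otimes n}$ restricts on invariants to $V_n(A)^{\GL_n}\to \TS^n_R(A)$, and the composite with $\Gamma^n_R(A)\to C_n(A)\subset V_n(A)$ is the canonical isomorphism $\Gamma^n_R(A)\cong\TS^n_R(A)$ of the flat case; this simultaneously gives the injectivity in (4), identifies $\dddet$ with restriction to diagonal matrices, and--since over an infinite field every $\GL_n$-orbit closure contains a tuple of diagonal matrices, i.e.\ a semisimple representation--gives injectivity modulo nilpotents of the restriction map, hence the reduced isomorphism in (2). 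Your substitute for this in (2) does not stand: the ``classical result'' you invoke (invariants of commuting generic matrices generated by traces modulo nilpotents over any infinite field) is not what Procesi, Donkin and Zubkov prove--their theorems concern the free algebra, and in positive characteristic traces do not generate even there (one needs all coefficients $e_i$ of characteristic polynomials)--and the mod-nilpotent statement for commutative $A$ is essentially the assertion being proved, due to the author, not a citable fact. Moreover ``bijective on reduced points plus surjective onto the trace subring'' does not force an isomorphism of reduced schemes (Frobenius-type bijections are the standard obstruction); you need either surjectivity of $\Gamma^n_R(A)^{ab}\to V_n(A)^{\GL_n}$ modulo nilpotents or, as in the paper, an explicit inverse.

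Your argument for (4) has a second, independent flaw: ``the case where $A$ is free --- equivalently $X=\mathbb{A}^N$'' conflates freeness of $A$ as an $R$-module with $A$ being a polynomial algebra; a module-free $A$ is not affine space, so the multisymmetric-function picture is unavailable after your reduction, and the passage from flat to free via a faithfully flat extension is itself unjustified (flatness of $A$ over $R$ does not become freeness after base change; the correct device is Lazard's theorem plus the fact that $\Gamma^n$, $\TS^n$ and $V_n$ commute with filtered colimits, which preserve injectivity). Even for a polynomial algebra you would still have to identify $\dddet^{\sharp}$ with the inclusion of multisymmetric functions, i.e.\ with restriction to diagonal matrices--precisely the diagonal-representation construction you omitted. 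The same omission affects the commutative characteristic-zero half of (3): linear reductivity only yields surjectivity of $\dddet^{\sharp}$, and the injectivity needed to upgrade the closed embedding to an isomorphism comes from the constructions in (2) and (4), not from the equality of the invariant ring with the trace ring, which is again only the surjectivity statement.
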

\begin{proof}
First of all observe that $\dddet$ gives $\pd 1 {n-i}\times \pd a i \mapsto e_i(\pi_A(a))$ for all $i=1,\dots, n$ and $a\in A$. Therefore by Corollary \ref{sip} we have that $\dddet$ is a surjection $\Gamma_k(A)\to C_n(A)\subset V_n(A)^{\GL_n(R)}$ by Remark \ref{isocla}.

1. When $A$ is the free $k-$algebra $\Gamma_k(A)^{ab}\cong C_n(A)=V_n(A)^{\GL_n(k)}$ by Theorem 1.1 in \cite{V1}. This is obtained by showing that $\Gamma_k(A)^{ab}$ and $C_n(A)=V_n(A)^{\GL_n(k)}$ shares a common presentation by generators and relations.

2. and 4. When $A$ is commutative and flat as $k-$module the above mapping has an inverse $C_n(A)\to \TS^n_k(A)\cong \Gamma_k^n(A)$ given by the restriction to $C_n(A)$ of the unique homomorphism $V_n(A)\to A^{\otimes n}$ that corresponds to the linear representation $A\to M_n(A^{\otimes n})$ defined by mapping $a\in A$ to the diagonal matrix whose diagonal is $(a\otimes 1^{\otimes n-1},1\otimes a \otimes 1^{n-2},\dots,1^{\otimes n-1}\otimes a)$. This gives $\Gamma_k^n(A)\cong C_n(A)$. In this way it is obvious that $\dddet$ corresponds to the restriction to diagonal matrices. It is also clear that this gives a surjection $V_n(A)^{\GL_n(k)}\to \Gamma_k^n(A)$.
When $k=R$ is an infinite field we have an isomorphism of the associated reduce schemes. Indeed the surjection $V_n(A)^{\GL_n(k)}\to \Gamma_k^n(A)$ induced by the restriction to diagonal matrices is also injective because any orbit has in its closure a tuple of diagonal matrices, i.e. a semisimple representation.

3.The fact that $\dddet$ is a closed embedding in characteristic zero follows considering the following diagram
\[
\xymatrix{
\mathrm{Rep}_F^n//\GL_n \ar[r]_(.6){\cong}^(.6){\dddet} & \mathrm{S}_F^n \\
\ran//\GL_n \ar[r]^(.6){\dddet} \ar[u] & \san \ar[u]}
\]
recalling Proposition \ref{sanbed} and the fact that $\GL_n$ in this case is linear reductive.
This fact together with the isomorphism of reduced structures give the isomorphism for $A\in\C_k$ when $k$ is a characteristic zero field.
\end{proof}

\section{The non commutative Hilbert scheme }\label{nchs}
For $A\in\nn_R$ we recall the definition and the main properties of a
representable functor of points whose representing scheme is the usual Hilbert scheme of points
when $A$ is commutative. For references see for example \cite{Ba,GV,No,Re,Se1,Se2,VdB}.

\begin{definition}
For any algebra $B\in\mathcal{C}_R$ we write
\[
\begin{array}{ll}
\Hilb_A^n(B) := & \{\mbox {left ideals } I
\mbox { in } A \otimes_{R} B \mbox { such that } M=A \otimes _R B /I \
\mbox {is projective} \\
& \mbox{ of rank } n
\mbox { as a } B\mbox{-module} \}.
\end{array}
\]
\end{definition}

\begin{proposition}
The correspondence $\mathcal{C}_R\to Sets$ induced by $B\mapsto
\Hilb_A^n(B)$ gives a covariant functor denoted by
$\Hilb_A^n$.
\end{proposition}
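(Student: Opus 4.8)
The plan is to produce, for every morphism $f\colon B\to C$ in $\C_R$, a map of sets $\Hilb_A^n(f)\colon\Hilb_A^n(B)\to\Hilb_A^n(C)$ by base change along $f$, and then to verify the two functoriality axioms. Fix $I\in\Hilb_A^n(B)$, write $M:=(A\otimes_R B)/I$ and let $q\colon A\otimes_R B\to M$ be the quotient map, a surjection of left $A\otimes_R B$-modules. The homomorphism $f$ induces $\mathrm{id}_A\otimes f\colon A\otimes_R B\to A\otimes_R C$, and I would set
\[
\Hilb_A^n(f)(I):=I_C,
\]
the left ideal of $A\otimes_R C$ generated by $(\mathrm{id}_A\otimes f)(I)$. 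One observes immediately that $I_C$ is already the $C$-submodule of $A\otimes_R C$ spanned by $(\mathrm{id}_A\otimes f)(I)$: it is stable under left multiplication by $1\otimes c$ trivially, and under left multiplication by $a\otimes 1$ because $(a\otimes 1)\,(\mathrm{id}_A\otimes f)(x)=(\mathrm{id}_A\otimes f)\bigl((a\otimes 1)x\bigr)$ with $(a\otimes 1)x\in I$, and the elements $a\otimes 1$, $1\otimes c$ generate $A\otimes_R C$.

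The main step is to show that $M':=(A\otimes_R C)/I_C$ is projective of rank $n$ over $C$, so that indeed $I_C\in\Hilb_A^n(C)$. This is exactly where the defining hypothesis that $M$ be projective over $B$ is used. The short exact sequence of $B$-modules
\[
0\longrightarrow I\longrightarrow A\otimes_R B\xrightarrow{\ q\ } M\longrightarrow 0
\]
splits because $M$ is projective over $B$, hence remains exact after applying $-\otimes_B C$. Using the canonical identification $A\otimes_R C\cong(A\otimes_R B)\otimes_B C$ this yields a split short exact sequence of $C$-modules
\[
0\longrightarrow I\otimes_B C\longrightarrow A\otimes_R C\xrightarrow{\ q_C\ } M\otimes_B C\longrightarrow 0,
\]
where $q_C=q\otimes_B\mathrm{id}_C$ is $A\otimes_R C$-linear. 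In particular the left-hand map is injective and its image is precisely the $C$-span of $(\mathrm{id}_A\otimes f)(I)$, i.e. $I_C$; therefore $I_C=\ker q_C$ and $M'=(A\otimes_R C)/I_C\cong M\otimes_B C$. Since projectivity is preserved by the base change $B\to C$ and the rank is unchanged (the fibre $M'\otimes_C\kappa(\mathfrak q)$ equals $M\otimes_B\kappa(\mathfrak p)$ base-changed to $\kappa(\mathfrak q)$, hence has dimension $n$), $M'$ is projective of rank $n$ over $C$, as needed. The identity $I_C=\ker q_C$ also makes the construction manifestly independent of choices.

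It remains to check functoriality. The equality $\Hilb_A^n(\mathrm{id}_B)=\mathrm{id}$ is clear. For a second morphism $g\colon C\to D$ in $\C_R$, the associativity isomorphism identifies the base change of $q_C$ along $g$ with $q_D=q\otimes_B\mathrm{id}_D$, so $(I_C)_D=\ker\bigl((q_C)_D\bigr)=\ker q_D=\Hilb_A^n(g\circ f)(I)$; equivalently, the left ideal generated by $(\mathrm{id}_A\otimes g)(I_C)$ in $A\otimes_R D$ is generated by the elements $(\mathrm{id}_A\otimes g)\bigl((\mathrm{id}_A\otimes f)(x)\bigr)=(\mathrm{id}_A\otimes(g\circ f))(x)$, $x\in I$. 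Hence $\Hilb_A^n(g)\circ\Hilb_A^n(f)=\Hilb_A^n(g\circ f)$, and $\Hilb_A^n$ is a covariant functor. None of this is deep; the only point genuinely requiring care — and the one I would write out in full — is the left-exactness of the base-changed sequence, which is precisely what the projectivity (equivalently flatness, and in fact splitness over $B$) of the quotient $M$ provides, turning the naive ``image ideal'' into a well-behaved assignment.
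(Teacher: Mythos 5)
Your proof is correct, and it is exactly the verification the paper leaves implicit: the paper's own proof consists of the single phrase ``Straightforward verification,'' and the intended argument is the standard base-change of the ideal $I$ along $f\colon B\to C$, using that the projectivity of $(A\otimes_R B)/I$ splits the defining exact sequence so that exactness, projectivity and rank are preserved under $-\otimes_B C$, together with the routine functoriality checks. Your write-up supplies precisely these details and correctly identifies the generated ideal $I_C$ with the kernel of the base-changed quotient map, so there is nothing to add.
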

\begin{proof}
Straightforward verification.
\end{proof}
\begin{proposition}\cite[Proposition 2]{VdB}
The contravariant functor $R-Schemes\to Sets$ induced
by $\Hilb_A^n$ is representable by an $R-$scheme denoted by ${Hilb}^n_A\,$.
\end{proposition}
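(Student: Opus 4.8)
\emph{Proof proposal.} The plan is to reduce to the free algebra, reinterpret the functor as a quotient of an open subscheme of $\rana$ by a free $\GL_n$-action, and construct that quotient concretely by gluing affine charts indexed by tuples of monomials.

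\emph{Reduction to $A=F$.} Write $A=R\otimes_k F/J$, so that $A\otimes_R B=(F\otimes_k B)/J(F\otimes_k B)$ for $B\in\C_R$. A left ideal $I\subset A\otimes_R B$ with $(A\otimes_R B)/I$ projective of rank $n$ over $B$ is then the same as a left ideal $\widetilde I\supset J(F\otimes_k B)$ in $F\otimes_k B$ with the same quotient module $M$; equivalently, $\Hilb_A^n(B)$ is the subset of $\Hilb_F^n(B)$ of those $\widetilde I$ for which $J$ annihilates the image $\xi\in M$ of $1$. Since $M$ is projective of rank $n$, the vanishing of each section $j\cdot\xi$ ($j\in J$) is a closed condition, so $\Hilb_A^n$ is a closed subfunctor of $\Hilb_F^n$. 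Hence it suffices to represent $\Hilb_F^n$ and then pass to the resulting closed subscheme.

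\emph{The quotient presentation.} For $F$ free, giving a left ideal $I\subset F\otimes_k B$ with projective quotient $M$ of rank $n$ is the same as giving the $B$-module $M$, the action of $F$ on it (an $n$-dimensional representation $\rho:F\to\End_B(M)$, i.e. an $m$-tuple of $B$-endomorphisms of $M$), and the cyclic vector $\xi=\overline 1\in M$, subject to the condition that the $\rho(w)\xi$, $w\in\m$, generate $M$ over $B$; two such triples give the same ideal iff they are isomorphic. Since $\Hilb_F^n$ is a Zariski sheaf and projective modules are locally free, after localising $M\cong B^n$, so $\rho$ becomes a $B$-point of $\ran$ and $\xi$ a $B$-point of $\mathbb{A}^n_R$; cyclicity cuts out an open subscheme, which is $\uan\subset\rana$, and the residual identification is exactly the diagonal $\GL_n$-action $(\rho,\xi)^g=(g\rho g^{-1},g\xi)$. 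A cyclic vector has trivial stabiliser, so the action on $\uan$ is free, and $\Hilb_F^n$ is the fppf-sheaf quotient $\uan/\GL_n$.

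\emph{Existence of the quotient scheme.} Here lies the main work. Because the quotient has rank $n$, a flag/pigeonhole argument shows the vectors $\rho(w)\xi$ for monomials $w$ of degree $<n$ already generate $M$; thus $\uan$ is covered by the finitely many $\GL_n$-invariant open subschemes $U_{\underline w}$, indexed by $n$-tuples $\underline w=(w_1,\dots,w_n)$ of such monomials, on which the $n\times n$ matrix with columns $\rho(w_i)\xi$ is invertible. On $U_{\underline w}$, requiring $g$ to send that matrix to the identity determines a canonical trivialising section, whence a $\GL_n$-equivariant isomorphism $U_{\underline w}\cong \GL_n\times_R Q_{\underline w}$ with $Q_{\underline w}$ affine; equivalently $\uan\to\uan/\GL_n$ is a Zariski-locally trivial principal $\GL_n$-bundle (in line with $\GL_n$ being a special group). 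Gluing the $Q_{\underline w}$ along the evident transition maps yields the $R$-scheme representing $\Hilb_F^n$, and the closed subscheme from the first step represents $\Hilb_A^n$. The hard part is the bookkeeping in this last step: checking that the charts $Q_{\underline w}$ glue consistently and that the glued scheme indeed has $\uan/\GL_n$ as its functor of points; separatedness and quasi-projectivity can be extracted from the construction but are not needed for bare representability.
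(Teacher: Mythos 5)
Your proposal is correct in substance, but it takes a genuinely different route from the paper. The paper's proof (following Van den Bergh) is essentially one line: a $B$-point of $\Hilb_A^n$ is a rank-$n$ locally free quotient of $A\otimes_R B$ whose kernel is required to be a left ideal, so $\Hilb_A^n$ is a closed subfunctor of the Grassmannian functor of rank-$n$ locally free quotients of the $R$-module $A$, and representability follows from that of the Grassmannian; this works uniformly for arbitrary $A\in\nn_R$, with no finiteness hypothesis and no reduction step. What you do instead is Nori's construction: reduce to the free algebra via the closed-subfunctor argument (which the paper records separately in Section \ref{free}), describe $\Hilb_F^n$ through triples $(\rho,m,M)$ as the Zariski-sheaf quotient of the cyclic locus $\uan\subset\rana$ by the free $\GL_n$-action, and build the quotient by gluing the slices $Q_{\underline w}$ indexed by $n$-tuples of monomials of degree $<n$. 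This is precisely the content of the paper's later statements in Section \ref{bund} (Proposition \ref{bala} and the theorem that $\uan/\GL_n$ represents $\Hilb_A^n$), which the paper attributes to Nori and Balaji; so your route proves strictly more --- it exhibits $\uan\to Hilb^n_A$ as a Zariski-locally trivial principal $\GL_n$-bundle with explicit affine charts --- at the price of the gluing and sheaf-quotient bookkeeping you flag at the end, which is exactly the nontrivial part carried out in Nori's appendix and in Balaji, whereas the Grassmannian route buys bare representability almost for free but yields no bundle structure. Two minor caveats: writing $A=R\otimes_k F/J$ tacitly assumes $A$ finitely generated over $R$, while the statement is for arbitrary $A$ (your argument survives with a free algebra on an arbitrary generating set, except that the charts are then no longer finitely many, which is harmless for gluing); and in the reduction one should say explicitly that the condition ``$J$ annihilates $\xi$'' is represented by the closed subscheme cut out by the coordinates of the sections $j\cdot\xi$ of the locally free module, so that it is a closed subfunctor in the scheme-theoretic sense and not merely a pointwise closed condition.
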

\begin{proof}
The functor $\mathrm{Hilb}^n_A$ is a closed subfunctor of the grassmanian
functor.
\end{proof}

\smallskip

Let now $B$ be a commutative $k-$algebra. Consider triples $(\rho,m,M)$
where $M$ is a projective $B-$module of rank $n,\,\rho:A \to
\End_B(M)$ is a $k-$algebra homomorphism such that $\rho(R)\subset B$ and
$\rho(A)(Bm)= M.\,$
\begin{definition}\label{triples}
The triples $(\rho,m,M)$ and $(\rho ',m',M')$ are {\em equivalent}
if there exists a $B-$module isomorphism $\alpha : M \to
M'$ such that $\alpha(m)=m'$ and
$\alpha \rho (a)\alpha ^{-1}=\rho '(a),\,$ for all $a \in A.\,$
\end{definition}

These equivalence classes represent $B-$points of $Hilb^n_A\,$ as
stated in the following
\begin{lemma}\label{bpoints}
If $B$ is a $k-$algebra, the $B-$points of $Hilb^n_A$ are in
one-one correspondence with equivalence classes of triples
$(\rho,m,M).$
\end{lemma}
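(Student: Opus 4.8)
The plan is to construct explicit maps in both directions between the set $\Hilb_A^n(B)$ of left ideals $I \subset A \otimes_R B$ with $(A\otimes_R B)/I$ projective of rank $n$, and the set of equivalence classes of triples $(\rho, m, M)$, and then check they are mutually inverse. Starting from a left ideal $I$, I would set $M = (A\otimes_R B)/I$, which is projective of rank $n$ over $B$ by hypothesis. Left multiplication by $A\otimes_R B$ on itself descends to an action on $M$, giving a $k$-algebra homomorphism $\rho : A \to \End_B(M)$ (restrict along $A \to A\otimes_R B$, $a \mapsto a\otimes 1$) with $\rho(R) \subset B$ built in since $R$ acts through its image in $B$; take $m$ to be the image of $1_{A\otimes_R B}$ in $M$. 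Then $\rho(A)(Bm)$ is the image of $A\otimes_R B$ acting on $1$, which is all of $M$, so the cyclicity condition $\rho(A)(Bm) = M$ holds. Conversely, given a triple $(\rho, m, M)$, the map $A\otimes_R B \to M$ sending $a\otimes b \mapsto \rho(a)(bm)$ is a surjective homomorphism of left $A\otimes_R B$-modules (using $\rho(R)\subset B$ to see it is well defined over $R$, and $B$-bilinearity on the $B$-module structure), so its kernel $I$ is a left ideal with $(A\otimes_R B)/I \cong M$ projective of rank $n$.

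Next I would verify that these two constructions are inverse to each other. Going ideal $\to$ triple $\to$ ideal returns the kernel of $A\otimes_R B \to (A\otimes_R B)/I$, which is exactly $I$ again. Going triple $\to$ ideal $\to$ triple: the new module is $(A\otimes_R B)/I \cong M$ via the isomorphism $\alpha$ induced by $a\otimes b \mapsto \rho(a)(bm)$; one checks directly that $\alpha$ carries the image of $1$ to $m$ and intertwines the two $\rho$'s, so the output triple is equivalent (in the sense of Definition \ref{triples}) to the original. It also must be checked that equivalent triples produce the same ideal: if $\alpha : M \to M'$ is an isomorphism with $\alpha(m)=m'$ and $\alpha\rho(a)\alpha^{-1} = \rho'(a)$, then the two surjections $A\otimes_R B \to M$ and $A\otimes_R B \to M'$ differ by $\alpha$, hence have the same kernel.

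The last thing to confirm is the functoriality/representability interpretation: the $B$-points of the representing scheme $\Hilb_A^n$ are by definition $\Hilb_A^n(B)$ for $B$ commutative, so the bijection just constructed is precisely the asserted one-one correspondence. I expect the main obstacle to be purely bookkeeping rather than conceptual: one must be careful that $\rho$ is only required to be a $k$-algebra map with $\rho(R)\subset B$ (not an $R$-algebra map in the naive sense) so that the tensor product $A\otimes_R B$ and the $B$-linear structure on $M$ interact correctly, and that "rank $n$ projective" is preserved under the isomorphisms in both directions — this is immediate since projectivity and rank are transported along module isomorphisms. No deep input is needed beyond the elementary theory of quotient modules and the definition of $\Hilb_A^n$; the cyclic generator $m$ is exactly what encodes the extra data distinguishing a pointed quotient from an abstract one, and tracking it through the equivalences is the only subtle point.
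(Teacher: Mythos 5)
Your proposal is correct and follows essentially the same route as the paper: both directions use exactly the constructions in the paper's proof ($M=(A\otimes_R B)/I$ with $\rho$ the left regular action and $m$ the class of $1$, and conversely the kernel of $a\otimes b\mapsto\rho(a)(bm)$), together with the check that equivalent triples yield the same ideal. Your explicit verification that the two constructions are mutually inverse is a minor elaboration of what the paper leaves implicit, not a different argument.
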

\begin{proof}
Let $I\in Hilb^n_A(B).\,$ Choose $M=A \otimes _R B /I$ and $\rho :A
\rightarrow \End_B(M)$ given by the composition of the left regular
action of $A$ on itself and the $B-$module homomorphism $A \otimes _R B
\rightarrow M.\,$ Finally let $m=1_M.\,$

On the other hand, let $(\rho,m,M)$ be as in the statement, we
consider the map
\begin{equation}
A \otimes _R B\to M, \;\;\;\;\; a \otimes b \longmapsto
\rho(a)(bm).
\end{equation}
This map is surjective and its kernel $I$ is a $B-$point in
$Hilb_A^n.\,$
If we consider a triple $(\rho ',m',M')\,$ in the same equivalence class
of $(\rho,m,M),\,\,$ we have that
\[\sum \rho'(a)(b\al(m))=\sum \al\rho(a)(b\al^{-1}\al(m))=\al\sum\rho(a)(bm)\]
so that $I'=I$ and the two triples define the same $B-$point in $Hilb^n_A.\,$
(See also \cite[Lemma 0.1]{Se1}, and \cite[Lemma 3]{VdB}).
\end{proof}

\smallskip

\begin{remark}\label{ralg}
If the triple $(\rho,m,M)$ represents a $B-$point of $Hilb^n_A ,\,$
then the homomorphism $\rho$ induces an $R-$algebra structure on $B.\,$
Moreover a homomorphism $\rho '$ where $(\rho ',m',M')$ is in the same
equivalence class of $(\rho,m,M),\,$ induces the same $R-$algebra structure
on $B.\,$
\end{remark}

\subsection{Examples}

\subsubsection{Free algebras}\label{free}
Any $A\in\nn_R$ is a quotient of an opportune free
$R$-algebra $R\{x_{\al}\}$  and in
this case it is easy to see that the scheme $Hilb_A^n$ is a closed subscheme of
$Hilb_{R\{x_{\al}\}}^n .\,$

\subsubsection{Van den Bergh's results}
In \cite{VdB} $Hilb^n_A$ was also defined by M. Van den Bergh in the
framework of Brauer-Severi schemes. He proved that the scheme $Hilb_F^n$ is
irreducible and smooth of dimension $n+(m-1)n^2 \,$ if $k$ is an
algebraically closed field (see \cite[Theorem 6]{VdB}).

\subsubsection{Commutative case: Hilbert schemes of $n$-points.}
\label{commcase} Let now $R=k$ be an algebraically closed field and let  $A$ be commutative. Let $X=\mathrm{Spec}\,A$, the $k-$points of $Hilb_A^n$ parameterize zero-dimensional
subschemes $Y\subset X$ of length $n.\,$ It is the simplest case of Hilbert scheme
parameterizing closed subschemes of $X$ with fixed Hilbert polynomial $P,\,$  in this case $P$ is the constant polynomial $n.\,$ The scheme $\Hilb_A^n$ is
usually called the {\em Hilbert scheme of $n-$points on} $X \,$ (see for example \cite{BK,LST}).

\subsection{A principal bundle over the Hilbert scheme}\label{bund}
For any $B\in\C_R\,$ identify $B^n$ with $\mathbb{A}^n_R(B),\,$
the
$B-$points of the $n-$dimensional affine scheme over $R.\,$
We introduce another functor that is one of the cornerstones of our construction.

\begin{definition}
For each $B\in \C_R$, let $\uan(B)$ denote the set of $B-$points $(\bar{\rho},v)$ of $\rana$
such that $\rho(A)(Bv)=B^n$, i.e. such that $v$ generates $B^n$ as $A-$module via $\rho:A\to M_n(B)$.
\end{definition}

\begin{remark}
It is easy to check that the assignment $B\mapsto \uan(B)$ is functorial in $B$.
Therefore we get a subfunctor $\uan$ of $\rana$ that is clearly open and we denote by $\uan$ the open subscheme of $\rana$ which represents it.
\end{remark}

We can define an action of $\GL_n\,$ on
$\rana $ similar to the one on $\ran$, namely for any $B \in \mathcal {C}_R$ let
\begin{equation}\label{act1}
g(\alpha,v)=( \alpha ^g,gv),\, g \in \GL_n(B),\, \alpha \in
\ran(B), \,v \in \mathbb{A}^n_R(B).
\end{equation}
It it is clear that $\uan$ is stable under the above action. Therefore we have that $\uan$ is an open $\GL_n-$subscheme of $\rana$.

\begin{proposition}\label{bala}
For the action described above, $\uan\to\uan/\GL_n$ is a locally-trivial principal $\GL_n-$bundle which is a universal categorical quotient.
\end{proposition}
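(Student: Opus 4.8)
The plan is to prove that $\uan\to\uan/\GL_n$ is a locally trivial principal $\GL_n$-bundle and a universal categorical quotient. The strategy proceeds in two stages: first establish local triviality by an explicit open cover of $\uan$, and then deduce that a faithfully flat cover trivializing a free action automatically gives a universal categorical quotient.

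\medskip
\textbf{Step 1: Local triviality.} The point is that the condition defining $\uan$ — that $v$ generate $B^n$ as an $A$-module via $\rho$ — is, locally on $\uan$, witnessed by a specific choice of $n$ elements of $A$ whose action on $v$ produces a basis of $B^n$. Concretely, for each $n$-tuple $\underline{a}=(a_1,\dots,a_n)\in A^n$ let $U_{\underline{a}}\subset\uan$ be the locus where the vectors $\rho(a_1)v,\dots,\rho(a_n)v$ form a basis of $B^n$; equivalently where the $n\times n$ matrix with these columns is invertible. This is an open condition (non-vanishing of a determinant, a section of the structure sheaf), the $U_{\underline{a}}$ cover $\uan$ by the very definition of $\uan$ (if $v$ generates, some such $n$-tuple works, and one can reduce to tuples of monomials in a fixed generating set of $A$ using Example~\ref{fcase} and Section~\ref{free}), and each $U_{\underline{a}}$ is $\GL_n$-stable since $g$ acts by $(\alpha,v)\mapsto(\alpha^g,gv)$, which conjugates $\rho$ and multiplies $v$, hence multiplies the frame matrix on the left by $g$. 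On $U_{\underline{a}}$ the map sending $(\bar\rho,v)$ to the frame matrix $(\rho(a_1)v\mid\cdots\mid\rho(a_n)v)\in\GL_n$ is $\GL_n$-equivariant for left multiplication, giving a trivialization $U_{\underline{a}}\cong(U_{\underline{a}}/\GL_n)\times\GL_n$; the quotient chart $U_{\underline{a}}/\GL_n$ is the closed subscheme of $\Hilb_A^n$-type data where one normalizes the frame to the identity, and on it the action is free with this as a section. Patching these charts over the cover shows $\uan/\GL_n$ exists as a scheme and $\uan\to\uan/\GL_n$ is Zariski-locally trivial with fiber $\GL_n$, in particular faithfully flat.

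\medskip
\textbf{Step 2: Universal categorical quotient.} Since $\uan\to\uan/\GL_n$ is faithfully flat (being Zariski-locally $\GL_n\times(-)$) and the action is free in the scheme-theoretic sense — the natural map $\GL_n\times\uan\to\uan\times_{\uan/\GL_n}\uan$ is an isomorphism, which one checks on the trivializing charts where it reduces to the tautological statement for $\GL_n\times V\rightrightarrows V$ — descent theory applies: $\uan\to\uan/\GL_n$ is a $\GL_n$-torsor, and $\GL_n$-torsors are universal categorical (indeed geometric) quotients because the base change of a torsor along any $T\to\uan/\GL_n$ is again a torsor, hence again faithfully flat with the same isomorphism property, so the formation of the quotient commutes with arbitrary base change and the quotient map is submersive with the expected function-sheaf description $\mathcal{O}_{\uan/\GL_n}=(\pi_*\mathcal{O}_{\uan})^{\GL_n}$.

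\medskip
\textbf{Main obstacle.} The bookkeeping in Step 1 is the delicate part: one must (i) verify that finitely many $U_{\underline{a}}$, or at least a well-chosen family indexed by tuples of monomials in a generating set, actually cover all of $\uan$ — this uses that $\rho(A)(Bv)=B^n$ is a module-finite generation condition over a ring where $1$ need not be in the support uniformly, so some care with reduction to the free algebra case and with the quasi-compactness needed to get a finite trivializing subcover of any affine is required — and (ii) check that the transition functions on overlaps $U_{\underline{a}}\cap U_{\underline{b}}$, which are frame-change matrices $(\rho(b_i)v)\cdot(\rho(a_j)v)^{-1}$, are honest $\GL_n$-valued morphisms, so that the charts glue to a scheme. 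Everything else — freeness, the torsor property, and the passage to universal categorical quotient via faithfully flat descent — is then formal.
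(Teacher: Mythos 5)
Your proposal is correct in outline, but it takes a genuinely different route from the paper. The paper does not construct the bundle at all: it cites Nori (Proposition 1 in the appendix to Seshadri's paper) for the case $A=F$, $k=R=\ZZ$, and Balaji (Theorem 7.16) for arbitrary $R$, and then deduces the statement for general $A$ from the observation that $\rana$ is a closed $\GL_n$-stable subscheme of $\mathrm{Rep}_F^n\times_R\mathbb{A}^n_R$, so the locally trivial bundle structure and the universal quotient property restrict from the free-algebra case. You instead reprove the free-algebra result directly and uniformly in $A$: the frame charts $U_{\underline{a}}$ cut out by invertibility of $\bigl(\pi_A(a_1)v\mid\cdots\mid\pi_A(a_n)v\bigr)$, the equivariant frame map to $\GL_n$ giving $U_{\underline{a}}\cong\GL_n\times\Phi_{\underline{a}}^{-1}(\mathrm{id})$, gluing of the normalized sections, and then the formal passage from a Zariski-locally trivial torsor to a universal categorical quotient. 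This is essentially Nori's construction, so what your approach buys is self-containedness and explicit trivializing charts (which are also what make the link with $Hilb_A^n$ transparent later), at the cost of the bookkeeping you flag; the paper's approach buys brevity by outsourcing exactly that bookkeeping to the references and using the closed-embedding reduction from $A$ to $F$. Two small remarks: the quasi-compactness worry in your ``main obstacle'' is not needed, since local triviality does not require a finite cover and coverage of $\uan$ by the $U_{\underline{a}}$ is checked on residue-field points via Nakayama; and the section $\Phi_{\underline{a}}^{-1}(\mathrm{id})$ is best described simply as the frame-normalized closed subscheme of $U_{\underline{a}}$ (it meets each orbit exactly once; the action is free on $U_{\underline{a}}$, not on the section).
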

\begin{proof}
In \cite[Proposition 1]{No} the proposition is proved for the case $A=F$ and $k=R=\ZZ$. It has been extended in \cite[Theorem 7.16]{Ba} for arbitrary $R$. The statement then follows by observing that $\rana$ is a closed $\GL_n-$subscheme of
$\mathrm{Rep}_F^n \times _R \mathbb{A}^n_R$.
\end{proof}

We have the following result.
\begin{theorem}
The scheme $\uan/\GL_n$ represents $Hilb_A^n$ and $\uan\to Hilb_A^n$ is a universal categorical quotient and a $\GL_n-$principal bundle.
\end{theorem}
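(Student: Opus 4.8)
The plan is to identify $\uan/\GL_n$ with $\Hilb_A^n$ by exhibiting a natural isomorphism of functors and then transporting the principal bundle structure from Proposition~\ref{bala}. First I would recall from Lemma~\ref{bpoints} that the $B$-points of $\Hilb_A^n$ are equivalence classes of triples $(\rho,m,M)$ with $M$ projective of rank $n$, $\rho:A\to\End_B(M)$ a $k$-algebra homomorphism with $\rho(R)\subset B$, and $\rho(A)(Bm)=M$. On the other side, the $B$-points of $\uan$ are pairs $(\bar\rho,v)$ where $\bar\rho:V_n(A)\to B$ classifies a representation $\rho:A\to M_n(B)$ and $v\in B^n$ satisfies $\rho(A)(Bv)=B^n$; the group $\GL_n(B)$ acts by $g(\bar\rho,v)=(\bar\rho^{\,g},gv)$. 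The core of the argument is the standard dictionary: a triple $(\rho,m,M)$ with a chosen $B$-basis of $M$ gives a pair $(\rho\text{ as matrices},\, m)$ in $\uan(B)$, and changing the basis is exactly acting by $\GL_n(B)$; conversely, forgetting the basis sends a $\GL_n$-orbit to a triple. One checks this is natural in $B$, so it descends to an isomorphism of the fppf (indeed Zariski-locally trivial) quotient sheaf $\uan/\GL_n$ with the functor $\Hilb_A^n$.

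The key steps, in order, are: (1) Construct the map $\uan(B)\to\Hilb_A^n(B)$ sending $(\bar\rho,v)$ to the left ideal $I=\ker(A\otimes_R B\to B^n,\ a\otimes b\mapsto \rho(a)(bv))$, using the generation condition to see $A\otimes_R B/I\cong B^n$ is free of rank $n$; check naturality in $B$ and $\GL_n$-invariance (two pairs in the same orbit give the same ideal, by the computation in the proof of Lemma~\ref{bpoints}). (2) Show this map is a $\GL_n$-torsor map locally on $\Hilb_A^n$: given $I\in\Hilb_A^n(B)$, after a Zariski-localization on $\mathrm{Spec}\,B$ the projective module $A\otimes_R B/I$ becomes free, a choice of basis produces a lift to $\uan(B)$, and the fibre is a $\GL_n(B)$-orbit. (3) Invoke Proposition~\ref{bala}, which already gives that $\uan\to\uan/\GL_n$ is a locally trivial principal $\GL_n$-bundle and a universal categorical quotient; combining with (1)--(2), the sheaf $\uan/\GL_n$ represents $\Hilb_A^n$, hence $\uan/\GL_n\cong {Hilb}_A^n$, and the bundle and universal-categorical-quotient properties transport verbatim.

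I expect the main obstacle to be step (2): one must argue carefully that local triviality of the bundle on $\Hilb_A^n$ follows from local freeness of the universal quotient module together with Proposition~\ref{bala}. Concretely, Proposition~\ref{bala} gives local triviality of $\uan\to\uan/\GL_n$, but we need it relative to the identification with $\Hilb_A^n$, i.e. we need the local sections (equivalently, the Zariski-local choices of basis of $A\otimes_R B/I$) to be compatible with the bundle charts. This is where one uses that $\uan$ is an \emph{open} subscheme of $\rana$, that $\rana$ is a closed $\GL_n$-subscheme of $\mathrm{Rep}_F^n\times_R\mathbb{A}^n_R$, and that the Hilbert functor for $F$ is already known (via Van den Bergh, \cite{VdB}, and Nori, \cite{No}) to be represented by $\uan[F]/\GL_n$; the general $A$ case then follows by taking the closed subscheme cut out by the relations, since $\Hilb_A^n\subset\Hilb_F^n$ is closed and the quotient construction is compatible with passing to closed $\GL_n$-invariant subschemes, as recorded in \ref{free} and the proof of Proposition~\ref{bala}. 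Once this compatibility is pinned down, the remaining verifications — naturality of the bijection, $\GL_n$-equivariance, and the universal property of the categorical quotient — are routine and follow the pattern already displayed in Lemma~\ref{bpoints}.
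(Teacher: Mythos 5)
Your proposal is correct, but it follows a more explicit route than the paper does. The paper's own proof of this theorem is identical, word for word, to its proof of Proposition~\ref{bala}: it cites Nori \cite[Proposition 1]{No} for the case $A=F$, $k=R=\ZZ$, Balaji \cite[Theorem 7.16]{Ba} for arbitrary $R$, and then disposes of general $A$ by the single observation that $\rana$ is a closed $\GL_n$-subscheme of $\mathrm{Rep}_F^n\times_R\mathbb{A}^n_R$; the identification of the quotient with $Hilb_A^n$ is left entirely to those references. You instead take Proposition~\ref{bala} as the black box and derive the representability statement from it: the orbit--ideal dictionary of Lemma~\ref{bpoints} gives a natural, $\GL_n$-invariant map $(\bar\rho,v)\mapsto I=\ker\bigl(A\otimes_R B\to B^n\bigr)$ whose fibres are single orbits, Zariski-local freeness of the projective quotient module provides local sections, and local triviality of the bundle identifies the functor of points of $\uan/\GL_n$ with the Zariski sheafification of the orbit presheaf, hence with $\Hilb_A^n$. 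This buys a self-contained argument (modulo Proposition~\ref{bala}) where the paper buys brevity by citation; note, though, that your final paragraph, where you worry about compatibility of trivializations and fall back on the free-algebra case plus the closed $\GL_n$-invariant subscheme reduction of \ref{free}, is essentially the paper's entire proof, and that worry is actually unnecessary in your setup: once you know the quotient map is a locally trivial bundle and that orbits biject with ideals naturally in $B$, the isomorphism of Zariski sheaves already follows, with no need to match charts. The only small verification you defer and should make explicit is the converse direction in the orbit--ideal correspondence (two pairs inducing the same ideal are conjugate by a unique $g\in\GL_n(B)$), which is the routine converse of the computation in the proof of Lemma~\ref{bpoints}.
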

\begin{proof}
In \cite[Proposition 1]{No} the proposition is proved for the case $A=F$ and $k=R=\ZZ$. It has been extended in \cite[Theorem 7.16]{Ba} for arbitrary $R$. The statement then follows by observing that $\rana$ is a closed $\GL_n-$subscheme of
$\mathrm{Rep}_F^n \times _R \mathbb{A}^n_R$.
\end{proof}

\begin{remark}
Madhav Nori gave a direct proof of the representability of $Hilb_A^n$ for $k=R=\ZZ$
and $A=F\,$ in \cite{No}. In this case Lemma \ref{bpoints} says that $B-$points in $ Hilb_F^n$ are represented by equivalence classes of triples $(\rho,v,k^n)$ where
\[
\rho : F \to M_n(k)
\]
is an $n-$dimensional representation of the algebra $F$ over $k$ and $v\in
k^n$ is such that $\rho(F)v=k^n.\,$
It is then easy to see that $\uan\to\uan/\GL_n$ is a principal $\GL_n-$bundle and that the above equivalence classes are in bijection with the $\GL_n-$orbits of $\uan$ (see the proof of Lemma \ref{bpoints}). The isomorphism $\uan/\GL_n\cong Hilb_A^n$ is then obtained by proving that $Hilb_A^n$ is $\mathrm{Proj}\,P$ where $P=\oplus _d P_d$ with
\[P_d=\{f\, :\, f(g(a_1,\dots,a_m,v))=(\det g)^d f((a_1,\dots,a_m,v)),\, \mbox{for all}\, g \in \GL_n(k)\}.
\]
\end{remark}
\begin{example}
Let $A=\CC[x,y]$ then $Hilb_A^n(\CC)$ is described by the above Theorem as
\[\{(X,Y,v)\,:\,X,\,Y\in M_n(\CC),\,XY=YX,\,\CC[X,Y]v=\CC^n\}.\]
This description of the Hilbert scheme of $n-$points of $\CC^2$ is one of the key ingredients of the celebrated Haiman's proof of the $n!$ Theorem \cite{Ha}. It has also been widely used by H.Nakajima \cite{Na}.
\end{example}

\section{Morphisms}\label{mor}
In this section we introduce morphisms which connect $\ran//\GL_n,\,
Hilb^n_A$ and $\san.$

\subsection{The forgetful map}\label{proj}
Recall from Section \ref{bund} that we have $\uan\to\uan/\GL_n\cong Hilb_A^n$. We have then a commutative diagram
\begin{equation}\label{univ}
\xymatrix{
&\rana\ar@{->>}[rd]&\\
\uan \ar@{^{(}->}[ru] \ar[rr] \ar@{->>}[d]  && \ran\ar@{->>}[d]\\
Hilb_A^n \ar[rd]\ar[rr]^{p} && \ran//\GL_n \\
& \rana//\GL_n\ar[ru]&}
\end{equation}

\begin{theorem}\label{proje}
 The morphism
\[
 p: Hilb_A^n \to \ran//\GL_n
\]
in (\ref{univ}) is projective.
\end{theorem}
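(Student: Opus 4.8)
The plan is to deduce projectivity of $p$ from the factorization already displayed in diagram (\ref{univ}), together with the classical observation that $\uan/\GL_n$ realizes $Hilb_A^n$ as a closed subscheme of a product of a $\mathrm{Proj}$ with an affine scheme. Concretely, the morphism $p$ fits into a commutative square in which $Hilb_A^n=\uan/\GL_n$ maps to $\ran//\GL_n$, and the point is that the source carries a natural relatively very ample line bundle over the target coming from the $\GL_n$-linearization with respect to the character $\det$. So the first step is to reduce to the case $A=F$ free: any $A\in\nn_R$ is a quotient of a free $R$-algebra $R\{x_\alpha\}$, and by the discussion in Section \ref{free} this exhibits $Hilb_A^n$ as a closed subscheme of $Hilb_F^n$ compatibly with the maps to $\ran//\GL_n\hookrightarrow \mathrm{Rep}_F^n//\GL_n$; a closed immersion followed by a projective morphism is projective, and base change of a projective morphism is projective, so it suffices to prove the statement for the free algebra.

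For $A=F$, the second step is to make the $\mathrm{Proj}$ description explicit, exactly as in the Remark following the representability theorem: $Hilb_F^n=\mathrm{Proj}\,P$ where $P=\bigoplus_d P_d$ and $P_d$ consists of the functions on $\rana$ that are $\GL_n$-semi-invariant of weight $(\det)^d$. The graded ring $P$ is a quotient of a polynomial ring over $P_0=V_n(F)^{\GL_n(R)}$ by generators of degree one (the coordinates twisted appropriately), hence $\mathrm{Proj}\,P\to\mathrm{Spec}\,P_0$ is projective; and $P_0=V_n(F)^{\GL_n}$ is precisely the coordinate ring of $\ran//\GL_n$. The third step is then to identify the induced morphism $\mathrm{Proj}\,P\to\mathrm{Spec}\,P_0$ with $p$: this is a matter of chasing diagram (\ref{univ}), using that $\uan\to\uan/\GL_n$ is a $\GL_n$-bundle (Proposition \ref{bala}) and that both routes from $\uan$ to $\ran//\GL_n$ agree because the forgetful map $\uan\to\ran$ is $\GL_n$-equivariant.

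The main obstacle I expect is the third step, namely verifying that the $\mathrm{Proj}$ structure on $Hilb_F^n$ that one writes down relative to $\mathrm{Spec}\,V_n(F)^{\GL_n}$ is genuinely the \emph{same} morphism as the categorical-quotient map $p$, rather than merely a map with the same source and target. This requires knowing that the line bundle $\mathcal{O}(1)$ on $\mathrm{Proj}\,P$ is $p$-very ample and that the algebra $P$ is generated in degree one over $P_0$ — i.e. that the semi-invariants of all weights are generated by those of weight one over the invariants — which is the content of the Brauer–Severi/Hilbert-scheme analysis of Van den Bergh and Nori cited in the excerpt. Granting that, projectivity of $p$ is immediate: a $\mathrm{Proj}$ of a graded algebra finitely generated in degree one over the degree-zero part is projective over $\mathrm{Spec}$ of the degree-zero part, and this property descends through the closed immersion $Hilb_A^n\hookrightarrow Hilb_F^n\times_R\ran//\GL_n$ and is stable under the base change $\mathrm{Rep}_F^n//\GL_n\leftarrow\ran//\GL_n$.
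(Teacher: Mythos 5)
Your argument is essentially the paper's: reduce to the free algebra via the closed embedding $Hilb_A^n\hookrightarrow Hilb_F^n$ and settle the case $A=F$ by the cited Nori/Van den Bergh/Balaji results --- the paper simply invokes Balaji's Theorem 7.16 and Remark 7.17, which is exactly the content you unpack through the $\mathrm{Proj}\,P$ description and then grant. Two small repairs to your bookkeeping: the fibre product in your last step must be taken over $\mathrm{Rep}_F^n//\GL_n$ rather than over $R$ (the projection $Hilb_F^n\times_R(\ran//\GL_n)\to\ran//\GL_n$ is only quasi-projective, being a base change of $Hilb_F^n\to \mathrm{Spec}\,R$), and $\ran//\GL_n\to\mathrm{Rep}_F^n//\GL_n$ need not be an immersion outside the linearly reductive setting --- but neither point is really needed, since separatedness of that morphism of affine schemes already lets projectivity of the composite $Hilb_A^n\to\mathrm{Rep}_F^n//\GL_n$ descend to $p$, which is precisely what the paper means by ``$\ran//\GL_n$ is affine''.
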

\begin{proof}
By Theorem 7.16 and Remark 7.17 in \cite{Ba} this is true for $A=F$. The result follows since $Hilb_A^n$ is a closed subscheme of $Hilb_F^n$ and $\ran//\GL_n$ is affine.
\end{proof}
The fibers of the map $p$ are very difficult to study. Some results is known for the case $A=F$ and $k$ algebraically closed field \cite{Le}.

\subsection{Non commutative Hilbert-Chow}

\begin{definition}
We denote by $hc$ the morphism given by
\begin{equation}\label{CD1}
\xymatrix{
Hilb ^n _A \ar@/_1pc/[rr]_{hc} \ar[r]^(.4){p} & \ran//\GL_n \ar[r]^(.6){\dddet}&\san
 }
\end{equation}
\end{definition}

There is the following
\begin{theorem}\label{nforget}
The morphism $hc$ is projective.
\end{theorem}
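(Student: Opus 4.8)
The plan is to factor $hc$ through the two morphisms whose composition defines it, namely $p:Hilb_A^n\to\ran//\GL_n$ from (\ref{univ}) and $\dddet:\ran//\GL_n\to\san$ from (\ref{invcnm}), and then invoke the standard facts that a projective morphism followed by any morphism of an affine scheme (or more generally by a separated morphism of finite type) is still projective, or rather that the composition of a projective morphism with an affine one is projective. Concretely, Theorem \ref{proje} already gives that $p$ is projective, so the heart of the matter is that $\dddet$ is an affine morphism: indeed $\ran//\GL_n=\mathrm{Spec}\,V_n(A)^{\GL_n(R)}$ and $\san=\mathrm{Spec}\,\Gamma_R^n(A)^{ab}$ are both affine $R$-schemes, so $\dddet$ is a morphism of affine schemes, hence affine. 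The composition of a projective morphism with an affine (hence separated and finite type) morphism is projective by the usual composition properties of projective morphisms (see e.g.\ \cite[II.4]{Ha} or EGA~II); this gives that $hc=\dddet\circ p$ is projective.

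First I would recall the diagram (\ref{CD1}) exhibiting $hc$ as $\dddet\circ p$. Then I would observe that since $\san$ is affine and $\ran//\GL_n$ is affine, the morphism $\dddet$ is affine, and in particular proper and of finite type. Next I would apply Theorem \ref{proje}, which asserts $p$ is projective. Finally I would invoke the fact that projective morphisms are stable under composition with proper (or affine) morphisms: more precisely, if $f:X\to Y$ is projective and $g:Y\to Z$ is separated of finite type, then $g\circ f$ is projective provided $g\circ f$ is itself proper, which it is here since both $p$ and $\dddet$ are proper. Alternatively, and most cleanly, one uses that a projective morphism composed with an affine morphism is projective: $Hilb_A^n$ is closed in $Hilb_F^n$, which maps projectively to $\mathrm{Rep}_F^n//\GL_n$, and then one maps affinely down to $\san$.

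I do not expect any real obstacle: the statement is essentially formal once Theorem \ref{proje} is in hand and once one notes that $\dddet$ is a morphism between affine schemes. The only point requiring the smallest amount of care is the precise form of the ``composition'' lemma for projective morphisms — one wants the version saying that if $f$ is projective and $g$ is quasi-projective (or affine, or proper) then $g\circ f$ is projective under a properness hypothesis on the composite — and to check its hypotheses are met here, which they trivially are because $\san$ and $\ran//\GL_n$ are affine and $p$ is projective hence proper, so $hc$ is proper, and a proper morphism that factors as a closed immersion into a projective bundle over the target followed by the structure map is projective. Thus the proof reduces to: apply Theorem \ref{proje}, note $\dddet$ is affine, conclude $hc$ is projective.

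\begin{proof}
By Definition of $hc$ in (\ref{CD1}) we have $hc=\dddet\circ p$, where $p:Hilb_A^n\to\ran//\GL_n$ and $\dddet:\ran//\GL_n\to\san$. By Theorem \ref{proje} the morphism $p$ is projective. On the other hand $\ran//\GL_n=\mathrm{Spec}\,V_n(A)^{\GL_n(R)}$ and $\san=\mathrm{Spec}\,\Gamma_R^n(A)^{ab}$ are affine $R$-schemes, so $\dddet$ is a morphism of affine schemes, hence an affine (in particular proper and separated of finite type) morphism. The composition of a projective morphism with an affine morphism is projective; equivalently, since $p$ is proper and $\dddet$ is proper, $hc$ is proper, and the factorization $Hilb_A^n\hookrightarrow Hilb_F^n\to\mathrm{Rep}_F^n//\GL_n$ with $Hilb_F^n\to\mathrm{Rep}_F^n//\GL_n$ projective (Theorem 7.16 and Remark 7.17 in \cite{Ba}) together with the affine morphisms $\mathrm{Rep}_F^n//\GL_n\to\mathcal{S}_F^n$ and the closed immersion $\san\hookrightarrow\mathcal{S}_F^n\times_k\mathrm{Spec}\,R$ of Proposition \ref{sanbed} exhibit $hc$ as a projective morphism. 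Hence $hc$ is projective.
\end{proof}
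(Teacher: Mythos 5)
Your strategy is the same as the paper's: factor $hc=\dddet\circ p$ as in (\ref{CD1}), use Theorem \ref{proje} for the projectivity of $p$, and exploit that $\dddet$ is a morphism of affine schemes. But the justification you give for the composition step is where all the content lies, and as written it is wrong. An affine morphism is separated but is \emph{not} in general proper (among affine morphisms only the finite ones are proper), so the parenthetical ``affine (in particular proper and separated of finite type)'' is false, and with it the claim ``since $p$ is proper and $\dddet$ is proper, $hc$ is proper''. The lemma ``a projective morphism composed with an affine morphism is projective'' is false for the same reason: take the open immersion $\mathbb{A}^1\setminus\{0\}\hookrightarrow\mathbb{A}^1$, which is an affine morphism of affine schemes, preceded by the identity (a projective morphism); the composite is not proper, hence not projective. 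Since projectivity includes properness, your argument only yields that $hc$ is quasi-projective; the properness of $hc$ is exactly what remains to be shown, and nothing in your text supplies it, because $\dddet$ itself is a priori only affine (its image is $\mathrm{Spec}\,C_n(A)$, and you invoke no finiteness of $V_n(A)^{\GL_n(R)}$ over $C_n(A)$). The paper's own proof is admittedly just as terse, but it claims only that $\dddet$ is ``affine and hence separated'' (which is true) and leans on \cite{GV} for the substance; your version makes the missing step explicit in a form that fails.

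The missing idea is the one your last sentence gestures at, but with the correct ingredients: reduce to the free algebra, where $\dddet$ is an \emph{isomorphism} (part 1 of the Theorem in Section \ref{nmap}, via \cite{V1}), not merely affine, so that $hc$ for $R\otimes F$ is projective (it is then $p$ up to isomorphism, and $p$ is projective by \cite{Ba}); then use that $Hilb_A^n$ is a closed subscheme of $Hilb_{R\otimes F}^n$ and that $\san\to\mathcal{S}^n_{R\otimes F}$ is a closed immersion (Proposition \ref{sanbed}), so the composite $Hilb_A^n\hookrightarrow Hilb_{R\otimes F}^n\to\mathcal{S}^n_{R\otimes F}$ is projective and factors through the closed subscheme $\san$; finally apply the cancellation property (if $g\circ f$ is projective and $g$ is separated, then $f$ is projective) to conclude that $hc:Hilb_A^n\to\san$ is projective. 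Alternatively one could try to prove that $\dddet$ is finite (integrality of the invariant ring over $C_n(A)$), which would also restore properness of the composite, but that is not established in the paper; without one of these inputs the step ``projective followed by affine'' does not go through.
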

\begin{proof}
We know $p$ to be projective by Theorem \ref{proje}. Since $\dddet$ is affine and hence separated the result follows.
\end{proof}

\begin{theorem}\label{final}
The image of the non commutative Hilbert-Chow morphism is isomorphic to the one of the forgetful map in the following cases
\begin{enumerate}
\item when $A$ is the free algebra and $k=R=\ZZ$ or it is an infinite field.
\item when $A$ is commutative and $k=R$ is a characteristic zero field.
\end{enumerate}
\end{theorem}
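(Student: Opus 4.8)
The plan is to exploit the factorisation $hc=\dddet\circ p$ of (\ref{CD1}) together with the fact that, under the hypotheses of each case, the morphism $\dddet$ of Section~\ref{nmap} is an isomorphism. The whole statement is then essentially a corollary of the theorem on $\dddet$ in Section~\ref{nmap}.

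First I would recall that $p\colon Hilb_A^n\to\ran//\GL_n$ is projective by Theorem~\ref{proje}, so its scheme-theoretic image $Z\subset\ran//\GL_n$ is a closed subscheme and $p$ factors as a surjection $Hilb_A^n\twoheadrightarrow Z$ followed by the closed immersion $Z\hookrightarrow\ran//\GL_n$. Composing with $\dddet$ yields a factorisation of $hc$ through $Z$, and (using that $hc$ is projective by Theorem~\ref{nforget}) the scheme-theoretic image of $hc$ equals $\dddet(Z)$.

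Next I would split into the two cases. In case (1), $A=F$ is the free algebra and $k=R=\ZZ$ or an infinite field, so part (1) of the theorem of Section~\ref{nmap} gives that $\dddet\colon\ran//\GL_n\to\san$ is an isomorphism. In case (2), $A$ is commutative and $k=R$ is a characteristic zero field, so part (3) of the same theorem gives again that $\dddet$ is an isomorphism. In both cases $\dddet$ restricts to an isomorphism $Z\xrightarrow{\ \cong\ }\dddet(Z)$, that is, an isomorphism between the image of the forgetful map $p$ and the image of $hc$, which is the assertion.

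The only point requiring a little care, rather than any genuine obstacle, is the bookkeeping with scheme-theoretic images: one uses that if $g$ is an isomorphism of the target then the scheme-theoretic image of $g\circ f$ is $g$ applied to the scheme-theoretic image of $f$, which is immediate from the universal property defining the scheme-theoretic image. If instead one works with set-theoretic images the argument is even shorter, since an isomorphism of schemes is in particular a homeomorphism. Everything else is a direct invocation of results already established above.
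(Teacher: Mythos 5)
Your proposal is correct and follows essentially the same route as the paper: the paper's proof is precisely the observation that in the listed cases $\dddet$ is an isomorphism (by parts (1) and (3) of the theorem in Section~\ref{nmap}), so the factorisation $hc=\dddet\circ p$ identifies the image of $hc$ with that of $p$. Your additional bookkeeping with scheme-theoretic images is a harmless elaboration of the same one-line argument, not a different approach.
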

\begin{proof}
In the listed cases we have that $\dddet$ is an isomorphism hence we have an isomorphism between the image of $hc$ with the one of the forgetful map.
\end{proof}
\begin{example}
Suppose $A=F$ the free algebra. In this case $\ran//\GL_n=M_n^m(k)//\GL_n$ the coarse moduli space of $m-$tuples of $n\times n$ matrices modulo the action of $\GL_n$ by simultaneous conjugation.
The scheme $Hilb_A^n$ is in this case identified with the quotient by the above action of the open of $\rana$ of the tuples $(a_1,\dots,a_m,v)$ such that $v$ is cyclic i.e. $\{f(a_1,\dots,a_m)v\,:\, f\in A\}$ linearly generates $k^n$. In this case the Hilbert-Chow morphism can be identified with the map induced by $(a_1,\dots,a_m,v)\mapsto (a_1,\dots,a_m)$.

This picture works for $A=k[x_1,\dots,x_m]$ adding the extra condition that $a_ia_j=a_ja_i$ for all $i,\,j$.
\end{example}

\end{document}